\numberwithin{equation}{section}
\theoremstyle{plain}
\newtheorem{prop}{Proposition}[section]
\newtheorem{coro}[prop]{Corollary}
\newtheorem{lemm}[prop]{Lemma}
\newtheorem{thm}[prop]{Theorem}
\theoremstyle{definition}
\newtheorem{defi}[prop]{Definition}
\newtheorem{exam}[prop]{Example}
\newtheorem{rema}[prop]{Remark}
\newcommand\HS[1]{\leavevmode\null\hspace{#1mm}}
\newcommand\wdots{, ...\HS{0.2}, }
\newcommand\xx{x}
\newcommand\XXX{X}
\newcommand\yy{y}
\newcounter{ITEM}
\newcommand\ITEM[1]{\setcounter{ITEM}{#1}\leavevmode\hbox{\rm(\roman{ITEM})}}
\title{Free Products of Trialgebras}
\author{Juwei Huang}
\address{J.H., School of Mathematical Sciences, South China Normal University, Guangzhou 510631, P. R. China}
\email{juwei1985@126.com}
\author{Yuqun Chen$^{\dagger}$}
\address{Y.C., School of Mathematical Sciences, South China Normal University, Guangzhou 510631, P. R. China}
\email{yqchen@scnu.edu.cn}
\author{Zerui Zhang$^{*}$}
\address{Z.Z., School of Mathematical Sciences, South China Normal University, Guangzhou 510631, P. R. China}
\email{zeruizhang@scnu.edu.cn}
\thanks{AMS 2020 {Subject Classification.} 17A61, 16S15,   20M10}
\keywords{Gr\"obner-Shirshov basis,  trialgebra,  trioid, dimonoid, free product}
\thanks{${}^{\dagger}$ Supported by the NNSF of China (11571121, 12071156)}
\thanks{${}^*$ Supported by the fellowship of China Postdoctoral Science Foundation 2021M691099 and the Young Teacher Research and Cultivation  Foundation of South China Normal University 20KJ02}
\thanks{${}^*$ Corresponding author}
\begin{document}
\begin{abstract}
We apply the method of Gr\"obner-Shirshov bases for replicated algebras developed by Kolesnikov to offer a general approach for constructing free products of trialgebrs (resp. trioids).   In particular, the  open problem of  Zhuchok on constructing free products of trioids is solved.
\end{abstract}
\maketitle

\section{Introduction}\label{s-it}
  Gr\"obner and  Gr\"obner--Shirshov bases theory was first introduced  by Buchberger~\cite{Buchberger65}  for commutative algebras and  independently by Shirshov~\cite{Sh} for Lie algebras.  Then it was  developed for various kind of algebras in the last tens of years under the name of Gr\"obner bases theory or  Gr\"obner--Shirshov bases theory. We refer to the survey~\cite{bokut-chen-survey} for more interesting history. This theory has become an effective computational tool in algebras, especially for those defined by generators and relations.  In particular, if we use generators and relations to construct free products of certain algebras,  trialgebras for instance, this theory turns out to be useful.

Associative dialgebras  (dialgebras, for short) and associative trialgebras (trialgebras, for short) are generalizations of associative algebras by applying certain ``replication procedure" \cite{Ko17}. Kolesnikov \cite{Ko17} established the Gr\"obner-Shirshov bases method for various replicated algebras such as dialgebras, trialgebras and Leibniz algebras.  In particular, he showed that every dialgebra or trialgebra can be embedded into its universal enveloping associative algebra. This fact on embedding turns out to be very useful in constructing free products of dialgebras or trialgebras.

 Dimonoids~\cite{Lo99} and trioids~\cite{Lo04} are generalizations of semigroups.  Trioids have close relationships with Hopf algebras~\cite{NJC}, Leibniz 3-algebras~\cite{JM} and Rota-Baxter operators~\cite{KJ}. Dimonoids and trioids are interesting subjects, for instance, see~\cite{Di1,Di2,Lo99,Lo04, Zhuchok17,Zhuchok15,Zhu150,Ko}.  Loday and  Ronco~\cite{Lo04}  constructed  a free trioid of rank 1, which can be easily generalized to the case of arbitrary rank, see also~\cite{Zhuchok15}. Trioids satisfying some identities are also studied,   Zhuchok \cite{Zhuchok19}  constructed free commutative trioids.  Free products of two dialgebras  were constructed in~\cite{Di1},  free products of arbitrary  dimonoids were constructed in~\cite{Zhuchok13} and free products
 of arbitrary  doppelsemigroups were constructed in~\cite{Zhuchok17-2}. One of the open problems raised by Zhuchok in \cite{Zhuchok19} is how to construct free products of arbitrary  trioids. The aim of this article is to solve this problem.

The paper is organized as follows:  We first recall the Gr\"obner-Shirshov bases theory for associative algebras. Then we recall the embeddings of dialgebras or trialgebras into their universal enveloping associative algebras. Finally, based on this embedding, we construct a linear basis for a free products of arbitrary  trialgebras. As a special case, if all the trialgebras are trioid algebras, then the linear basis we obtained will be a set of normal forms of elements for a free product of trioids.

\section{Gr\"obner-Shirshov bases theory for trialgebras and dialgebras}\label{sec-gsb-ass}
 The main aim of this article is  to offer a general approach for constructing free products of dialgebras or trialgebras or trioids.   The method we applied here is the Gr\"obner-Shirshov bases theory for dialgebras or trialgebras which is established by Kolesnikov~\cite{Ko17}.  Kolesnikov showed that  every dialgebra or trialgebra can be embedded into its universal enveloping  associative algebra~\cite{Ko17}, in particular, this holds  for free products of arbitrary  dialgebras or trialgebras.   In light of this, we shall first construct the universal enveloping associative algebra of the desired free products of certain algebras, which in turn indicates a linear basis of a free product of dialgebras or trialgebras.

The aim of this section is to recall the Gr\"obner-Shirshov bases  theories for associative algebras and trialgebras, respectively.

\subsection{Gr\"obner-Shirshov bases theory for associative algebras}\label{subsec-cd-ass}
Roughly speaking, the method of Gr\"obner-Shirshov bases for various algebras is a special kind of rewriting rule. In general, one first construct a linear basis for the free algebra under consideration, then for the linear basis, one introduce some well-order which is more or less compatible with the operations of the algebra. Finally, one study the leading monomial of elements of an ideal generated by a set.  And the key lies in the conditions under which one can ``reduce" every element into a linear combination of the linear basis of the quotient algebra. Now we recall the Gr\"obner-Shirshov bases for associative algebras.

Let~$X$ be a well-ordered set and let~$k\langle X\rangle$ be the free associative algebra generated by~$X$ over a field~$k$.
Denote by~$X^*$ the free monoid generated by~$X$,  which consists of all words (i.e., sequences) on $X$ including the empty word (denoted by $\varepsilon$). Clearly, $X^*$ forms a linear basis for~$k\langle X\rangle$.

 For every word~$u=x_{i_1}x_{i_2}... x_{i_n}\in X^*$, where the letters~$x_{i_1},...,x_{i_n}$ lie in~$\XXX$, we define  the  \emph{length}~$\ell(u)$ of~$u$ to be~$n$, in particular,  we have~$\ell(\varepsilon)=0$. We recall that the \textit{deg-lex order} (degree-lexicographic order) on $X^*$ is defined as  follows:
for all words~$u=x_{i_1}x_{i_2}... x_{i_m}$, and~$v=x_{j_1}x_{j_2}... x_{j_n}\in X^*$, where $x_{i_l},~x_{j_t}\in X$, we define
\begin{equation*}
u>v \ \ \  \ \mbox{if}\ (\ell(u),x_{i_1},x_{i_2},..., x_{i_m})>(\ell(v),x_{j_1},x_{j_2},..., x_{j_n}) \ \mbox{lexicographically}.
\end{equation*}
In particular, if~$u$ is not the empty word, then  we have~$u>\varepsilon$.
A well order $>$ on $X^*$ is called \textit{monomial} if for all words~$u,v,w_1,w_2\in X^*$, we have
$$
u>v \Rightarrow w_1uw_2>w_1vw_2.
$$
Clearly, the above deg-lex order on~$X^*$  is a monomial order.

For every nonzero polynomial $f=\sum_{i=1}^{n}\alpha_iu_i\in k\langle X\rangle$,  where $0\neq \alpha_i\in k$, $1\leq i\leq n$, $u_1,\dots,u_n\in X^*$ and~$u_1>u_2>...>u_n$,
we call $u_1$  the \textit{leading monomial} of $f$, denoted by $\overline{f}$; and call~$\alpha_1$ the \emph{leading coefficient} of~$f$, denoted by~$\mathsf{lc}(f)$.
A polynomial~$f$ is called \emph{monic} if~$\mathsf{lc}(f)=1$, and a nonempty subset~$S$ of~$k\langle X\rangle$ is called \emph{monic} if every element in $S$ is monic.  For every~$S\subseteq k\langle X\rangle$, define
$$\overline{S}=\{\bar{s}\mid s\in S \}.$$
And for every set~$U,V\subseteq X^*$, define
$$UV=\{uv\mid u\in U, v\in V\}.$$
Denote by~$\mathsf{Id}(S)$ the ideal of $ k\langle X\rangle$ generated by~$S$.  For convenience, we define $\bar 0=0$ and define~$0<u$ for every $u\in X^*$.

A monic set $S$ is called a \emph{Gr\"{o}bner-Shirshov basis} in $ k\langle X\rangle$ for the quotient algebra $k\langle X|S\rangle:= k\langle X\rangle/\mathsf{Id}(S)$, if for every nonzero polynomial~$f\in\mathsf{Id}(S)$ , there exists some words~$u,v\in X^*$  and some element~$s\in S$ satisfying~$ \bar{f}=u\bar{s}v$. By this definition, it is in general not easy to see whether a set~$S$ is a Gr\"{o}bner-Shirshov basis for~$k\langle X|S\rangle$, so in practice, we usually consider the following special elements in the ideal generated by~$S$:  For all~$f,g\in S$,
\ITEM1 If there exist words~$u,v\in X^*$
such that $\bar{f}=u\bar{g}v$, then $(f,g)_{\bar{f}}=f-ugv$ is called an
\textit{inclusion composition} of~$f$ and~$g$ with respect to~$\bar{f}$;
\ITEM2 If there exist words~$u,v\in X^*$
such that $\bar{f}u=v\bar{g}$ and
$1\leq \ell(u)<\ell(\bar{g})$, then $(f, g)_{\bar{f}u}=fu-vg$ is
called  an \textit{intersection composition} of~$f$ and~$g$ with respect to~$\bar{f}u$.
If all the compositions of the form~$(f,g)_w$ can be written as linear combinations of elements of the form~$u_is_iv_i$ satisfying~$u_i\overline{s_i}v_i<w$, where all~$u_i,v_i$ lie in~$X^*$ and each~$s_i$ lies in~$S$, then the composition~$(f,g)_w$ is called trivial.

The following result was implicitly proved in~\cite{Sh}, and independently proved in~\cite{Ber}, see also~\cite{Bo,Bo1}. By this lemma, in order to see whether a set~$S$ forms a Gr\"obner-Shirshov basis for~$k\langle X|S\rangle$, it suffices to calculate all the possible compositions.

\begin{lemm}\cite{Sh,Ber,Bo,Bo1}\label{l1} Let $S$ be a monic subset of~$k\langle X\rangle$  and let~$\mathsf{Id}(S)$ the ideal of $k\langle X\rangle$ generated by $S$. Then the following statements are equivalent.

\ITEM1 All the compositions of elements in~$S$ are trivial.

\ITEM2 The set~$S$ is a Gr\"obner-Shirshov basis for $k\langle X|S\rangle$, that is, for every nonzero polynomial~$f\in \mathsf{Id}(S)$, we have~$\bar{f}=u\bar{s}v$
for some  element~$s\in S$ and for some words~$u,v\in  X^*$.

\ITEM3 $\mathsf{Irr}(S):=\{u+\mathsf{Id}(S)\mid u\in X^*\setminus X^*\overline{S}X^*\}$ forms a $ k$-basis of the quotient associative algebra $ k\langle X|  S\rangle$.
\end{lemm}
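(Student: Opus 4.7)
The plan is to prove the cycle (ii) $\Rightarrow$ (iii) $\Rightarrow$ (ii) $\Rightarrow$ (i) $\Rightarrow$ (ii); the first three implications will follow routinely from the definitions, while essentially all the substance sits in the last one. For (ii) $\Rightarrow$ (iii), I would reduce an arbitrary polynomial by repeatedly subtracting multiples $\alpha u s v$ whenever some monomial factors as $u\overline{s}v$; the well-order on $X^{\ast}$ forces termination and shows that $\mathsf{Irr}(S)$ spans $k\langle X\mid S\rangle$, while (ii) itself delivers linear independence, since any nonzero linear combination of irreducible words lying in $\mathsf{Id}(S)$ would have an irreducible leading monomial, contradicting (ii). Conversely, for (iii) $\Rightarrow$ (ii), if some nonzero $f\in\mathsf{Id}(S)$ had $\bar{f}$ irreducible, then reducing only the strictly smaller monomials of $f$ (which cannot disturb $\bar{f}$) would yield a nonzero element of $\mathsf{Id}(S)$ supported on $\mathsf{Irr}(S)$, contradicting linear independence. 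For (ii) $\Rightarrow$ (i), every composition $(f,g)_{w}$ lies in $\mathsf{Id}(S)$ with leading monomial strictly below $w$ by construction, and iterating (ii) exhibits it as a sum of $u_{i}s_{i}v_{i}$ with $u_{i}\overline{s_{i}}v_{i}<w$, which is precisely the definition of triviality.

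The nontrivial implication is (i) $\Rightarrow$ (ii). Given a nonzero $f\in\mathsf{Id}(S)$, I would fix a representation $f=\sum_{i}\alpha_{i}u_{i}s_{i}v_{i}$ with $s_{i}\in S$, set $w_{i}=u_{i}\overline{s_{i}}v_{i}$ and $w=\max_{i}w_{i}$, and choose the representation so that the pair $(w,\#\{i:w_{i}=w\})$ is lexicographically minimal among all representations. The aim is to show $\bar{f}=w$, which immediately yields $\bar{f}=u\overline{s}v$ for some $s\in S$. If instead $\bar{f}<w$, the coefficients of the monomial $w$ must cancel, so there exist distinct indices $i,j$ with $w_{i}=w_{j}=w$, producing an ambiguity between $\overline{s_{i}}$ and $\overline{s_{j}}$ inside $w$. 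I would then split into three subcases: (a) the two occurrences are disjoint, handled by the direct identity
$$
u_{i}s_{i}v_{i}-u_{j}s_{j}v_{j}=u(s_{i}-\overline{s_{i}})c\,s_{j}d-u\,s_{i}c(s_{j}-\overline{s_{j}})d,
$$
where $w=u\overline{s_{i}}c\overline{s_{j}}d$; here every new $w$-value sits strictly below $w$ by the monomial property of the order. Cases (b) and (c), one occurrence contained in the other and proper overlap at a suffix/prefix, are exactly the inclusion and intersection composition setups, and hypothesis (i) supplies a rewrite of the same shape. In every case, $\alpha_{i}u_{i}s_{i}v_{i}+\alpha_{j}u_{j}s_{j}v_{j}$ can be absorbed into a single term of $w$-value $w$ with coefficient $\alpha_{i}+\alpha_{j}$ plus terms of $w$-value strictly below $w$, strictly decreasing the invariant $(w,\#\{i:w_{i}=w\})$ and contradicting minimality.

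The hardest point is the bookkeeping in this minimality argument: one must verify that after invoking the triviality of a composition, the resulting rewrite really produces a new representation with a strictly smaller invariant, even though the trivial-composition witnesses themselves introduce new indexed terms. The monomial property of $>$ is essential here to guarantee $\overline{u h v}=u\overline{h}v$, so that the $w$-values of the new summands are controlled as claimed. This is the standard delicate step in any Composition-Diamond style proof, and it is why the argument inducts on the invariant pair rather than attempting a global rewrite in one shot.
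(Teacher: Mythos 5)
The paper does not prove this lemma at all---it is quoted from the literature (Shirshov, Bergman, Bokut et al.) as the standard Composition--Diamond lemma---so there is no in-paper argument to compare against. Your sketch is a correct rendition of the classical proof: the equivalences (ii)$\Leftrightarrow$(iii) and (ii)$\Rightarrow$(i) by reduction, and the key implication (i)$\Rightarrow$(ii) via the lexicographically minimal representation $(w,\#\{i: w_i=w\})$, with the disjoint-occurrence identity and the inclusion/intersection cases handled exactly as in Bergman's and Bokut's treatments, including the essential use of the monomial property to control the $w$-values of the newly introduced terms.
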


Now we recall a known fact on Gr\"obner-Shirshov bases for associative algebras, and we omit the proof since it is quite similar to those for Lemmas~\ref{lemm-gsb} and~\ref{thm-fpt}.
 \begin{lemm}\label{ass-gsb}
   Let~$\{k\langle Y_i | U_i\rangle\mid i\in I\}$ be a family of associative algebras such that $Y_i\cap Y_j =\emptyset$ for all distinct~$i,j\in I$. If for every~$i\in I$, $U_i$ is a Gr\"obner-Shirshov basis for~$k\langle Y_i | U_i\rangle$, then~$\cup_{i\in I}U_i$ forms a Gr\"obner-Shirshov basis for~$k\langle \cup_{i\in I}Y_i | \cup_{i\in I}U_i\rangle$. In particular, for every index~$j\in I$, $k\langle Y_j | U_j\rangle$ is isomorphic to the subalgebra of~$k\langle \cup_{i\in I}Y_i | \cup_{i\in I}U_i\rangle$  generated by $\{y_j+\mathsf{Id}(\cup_{i\in I}U_i)\mid y_j\in Y_j\}$ under the isomorphism which maps~$y_j+\mathsf{Id}(U_j)$ to~$y_j+\mathsf{Id}(\cup_{i\in I}U_i)$ for every~$y_j$ in~$Y_j$.
     \end{lemm}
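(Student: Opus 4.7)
The plan is to invoke Lemma~\ref{l1}: to show $\cup_{i\in I}U_i$ is a Gr\"obner--Shirshov basis for $k\langle \cup_{i\in I}Y_i\mid \cup_{i\in I}U_i\rangle$, it suffices to verify that every composition of elements of $\cup_{i\in I}U_i$ is trivial. First I would fix a monomial well-order on $(\cup_{i\in I}Y_i)^*$ extending, for each $i\in I$, the given order on $Y_i^*$ (for instance, well-order $I$ and combine with the deg-lex order on the union).

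The key observation is that since $Y_i\cap Y_j=\emptyset$ for $i\neq j$, each leading monomial $\bar{u}$ for $u\in U_i$ lies entirely in $Y_i^*$. Consider a composition $(f,g)_w$ with $f\in U_i$ and $g\in U_j$. If $i\neq j$, then an inclusion composition would require $\bar{g}\in Y_j^*$ to appear as a subword of $\bar{f}\in Y_i^*$, which is impossible because $Y_i$ and $Y_j$ share no letters; an intersection composition would require a nontrivial overlap of $\bar{f}$ and $\bar{g}$, which is equally impossible. Hence the only compositions to check are those with $f,g\in U_i$ for the same $i$, and these are trivial by hypothesis. An easy but important check: if $(f,g)_w$ is trivial as a composition over $k\langle Y_i\rangle$, witnessed by a representation $\sum \alpha_k u_k s_k v_k$ with $u_k,v_k\in Y_i^*$, $s_k\in U_i$, and $u_k\overline{s_k}v_k<w$, then the same representation works in $k\langle \cup_{i\in I}Y_i\rangle$ relative to $\cup_{i\in I}U_i$. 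Thus $\cup_{i\in I}U_i$ is a Gr\"obner--Shirshov basis.

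For the embedding statement, I would use Lemma~\ref{l1}\ITEM3. The normal forms for $k\langle Y_j\mid U_j\rangle$ are $\mathsf{Irr}(U_j)=Y_j^*\setminus Y_j^*\overline{U_j}Y_j^*$, and those for the big algebra are $\mathsf{Irr}(\cup_{i\in I}U_i)=(\cup_{i\in I}Y_i)^*\setminus (\cup_{i\in I}Y_i)^*\overline{\cup_{i\in I}U_i}(\cup_{i\in I}Y_i)^*$. Because any word in $Y_j^*$ contains only letters from $Y_j$, while $\overline{U_i}\subseteq Y_i^*$ for $i\neq j$ involves letters outside $Y_j$, we see that a word $w\in Y_j^*$ lies in $\mathsf{Irr}(\cup_{i\in I}U_i)$ if and only if it lies in $\mathsf{Irr}(U_j)$. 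Hence the natural map $y_j+\mathsf{Id}(U_j)\mapsto y_j+\mathsf{Id}(\cup_{i\in I}U_i)$ sends a $k$-basis injectively to a subset of a $k$-basis, and so it extends to an injective algebra homomorphism, giving the desired embedding.

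The main obstacle is essentially psychological: one has to be careful that the order on $(\cup_{i\in I}Y_i)^*$ restricts to the orders used to verify triviality of compositions inside each $k\langle Y_i\rangle$, and that ``triviality'' in the smaller free algebra lifts to ``triviality'' in the larger one. Neither is serious, since any monomial order on the union restricts to a monomial order on each $Y_i^*$, and since $Y_i^*$ sits inside $(\cup_{i\in I}Y_i)^*$ as a sub-monoid whose elements are incomparable only to words involving outside letters, the inequalities $u_k\overline{s_k}v_k<w$ are preserved verbatim. Once these bookkeeping issues are settled, the proof reduces to the disjointness observation on leading monomials.
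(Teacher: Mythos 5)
Your proof is correct and is essentially the argument the paper intends: the paper omits the proof of this lemma, remarking only that it is similar to those of Lemmas~\ref{lemm-gsb} and~\ref{thm-fpt}, and your route --- disjointness of the alphabets rules out all inclusion and intersection compositions between $U_i$ and $U_j$ for $i\neq j$, triviality of same-index compositions transfers verbatim to the larger free algebra because the chosen order restricts to the original one on each $Y_i^*$, and Lemma~\ref{l1}\ITEM3 then identifies $\mathsf{Irr}(U_j)$ with the irreducible words of $Y_j^*$ inside $\mathsf{Irr}(\cup_{i\in I}U_i)$ to give the embedding --- is the standard and intended one. No gaps.
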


\begin{rema}
 By Lemma \ref{ass-gsb}, one can easily construct a linear basis of a free product of an arbitrary family of associative algebras. Unfortunately, a similar result of Lemma~\ref{ass-gsb} does not hold for dialgebras or trialgebras,  see  \cite[Theorem 4.8]{Di1} for instance.
 \end{rema}

\subsection{Embeddings of dialgebras and trialgebras into associative algebras}
In this section,  we shall shortly recall the definition of dialgebras, trialgebras, and the embeddings of dialgebras and trialgebras into their corresponding universal enveloping associative algebras. All of the results in this section can be found in~\cite{Ko17}. For the convenience of the readers, we shortly recall what are needed here.

\begin{defi}\cite{Lo04}\label{defi-dia}
A \emph{dialgebra} (resp. \emph{dimonoid}) is a vector space (resp. set)~$(D, \vdash,\dashv)$ equipped with two binary associative operations
$\vdash$ and $\dashv$ such that the following identities hold:
\begin{equation}
\begin{cases}
a \dashv(b \vdash c)=a\dashv (b\dashv c),\\
(a\dashv b)\vdash c=(a\vdash b)\vdash c, \\
a\vdash(b\dashv  c)=(a\vdash b)\dashv c,
\end{cases}
\end{equation}
for all~$a, b$ and~$c$ in~$D$.    We denote by~$\mathsf{V}^{(2)}(X)$ (resp. $\mathsf{Vid}^{(2)}(X)$) the free dialgebra (resp. dimonoid) generated by~$X$.
\end{defi}

\begin{defi}\cite{Lo04}\label{l2}
A \emph{trialgebra} (resp. \emph{trioid}) is a vector space (resp. set) $(T,\vdash,\dashv,\perp)$ equipped with 3 binary associative operations: $\vdash$, $\dashv$ and $\perp$, satisfying the following identities:
\begin{equation}\label{eq00}
\begin{cases}
a\dashv(b\vdash c)=a\dashv (b\dashv c),\\
(a\dashv b)\vdash c=(a\vdash b)\vdash c, \\
a\vdash(b\dashv c)=(a\vdash b)\dashv c,\\
a\dashv(b\perp c)=a\dashv (b\dashv c),\\
(a\perp b)\vdash c=(a\vdash b)\vdash c, \\
a\vdash(b\perp c)=(a\vdash b)\perp c,\\
a\perp(b\dashv c)=(a\perp b)\dashv c,\\
a\perp(b\vdash c)=(a\dashv b)\perp c
\end{cases}
\end{equation}
for all $a, \ b, \ c\in T$.   We denote by~$\mathsf{V}^{(3)}(X)$ (resp. $\mathsf{Vid}^{(3)}(X)$) the free trialgebra (resp. trioid) generated by~$X$.
\end{defi}

Obviously, every trialgebra can be represented by generators and relations: For a trialgebra $T$, if $X=\{a_i\mid i\in I\}$ is a $k$-basis of $T$ and
$$S=\{a_i\delta a_j=\{a_i\delta a_j\}\mid i,j\in I,\delta\in\{\vdash, \dashv, \perp\}\}$$ is the multiplication table, where~$\{a_i\delta a_j\}$ is of the form~$\Sigma \alpha_{_{\delta ,ijt}}a_t$, $\alpha_{_{\delta ,ijt}}\in k$ and $a_t\in X$,  then we have
 $$T\cong \mathsf{V}^{(3)}(X|S):=\mathsf{V}^{(3)}(X)/\mathsf{Id}^{(3)}(S),$$
 where~$\mathsf{Id}^{(3)}(S)$ is the ideal of~$ \mathsf{V}^{(3)}(X)$ generated by~$S$ .

Similarly, let~$T$ be a trioid. For~$\delta\in\{\vdash, \dashv, \perp\}$, denote by~$\{x\delta y\}$ the letter in~$T$ that is equal to~$x\delta y$.  Let~$\mathsf{Vid}^{(3)}(T)$ be the free trioid generated by~$T$ and denote by~$\rho_{_S}$ the congruence of~$\mathsf{Vid}^{(3)}(T)$ generated by
 $$
 S=\{(x\delta y,\{x\delta y\})\mid x,y\in T, \delta\in\{\vdash, \dashv, \perp\}\}.
 $$
 Then it is clear that we have
 $$T\cong \mathsf{Vid}^{(3)}(T|S):=\mathsf{Vid}^{(3)}(T)/\rho_{_S}.$$
 Similar results hold for dialgebras and dimonoids.    In particular, we can denote by~$\mathsf{V}^{(t)}(X|S) $ (resp. $\mathsf{Vid}^{(t)}(X|S)$) an arbitrary trialgebra (resp. trioid) when~$t=3$ and an arbitrary dialgebra (resp. dimonoid) when~$t=2$.

Note that if $(T, \vdash,\dashv,\perp)$ is a trioid (resp. trialgebra), then $(T,\vdash,\dashv)$ is a dimonoid (resp. dialgebra). Obviously, dialgebras and trialgebras are generalizations of associative algebras.
Recall that for every dialgebra (dimonoid)~$D$, for all~$\yy_1\wdots\yy_m$ in~$D$, by~\cite{Lo99} every parenthesizing of
$$
\yy_1\vdash\cdots\vdash\yy_t\dashv\cdots\dashv\yy_m
$$
 gives the same element in~$D$,  which we denote by~$\yy_1...\dot{\yy}_t...\yy_m$.    For instance, we have~$\yy_3\yy_2\dot{\yy}_1=\yy_3\vdash\yy_2\vdash\yy_1$ and~$\dot{\yy}_7\yy_8=\yy_7\dashv\yy_8$.

 Similarly, let~$T$ be a trialgebra (trioid). Then for all~$\yy_1,...,\yy_m\in T$,  by~~\cite{Lo04} every parenthesizing of
$$
(\yy_1\vdash\cdots \vdash \yy_{m_{1}-1})\vdash(\yy_{m_1}\dashv\cdots \dashv \yy_{m_{2}-1})\perp(\yy_{m_2}\dashv\cdots \dashv \yy_{m_{3}-1})\perp\cdots \perp(\yy_{m_{r}}\dashv\cdots\dashv \yy_{m})
$$
gives the same element in $T$, which we denote by
$$ \yy_1\dots\yy_{m_{1}-1}\dot{\yy}_{m_1}\yy_{m_1+1}\dots \yy_{m_{2}-1}\dot{\yy}_{m_2}\yy_{m_{2}+1}\dots \yy_{m_{r}-1}\dot{\yy}_{m_{r}}\yy_{m_{r}+1}\dots\yy_{m}.$$

These notations indicate some connections between dialgebras (or trialgebras) with associative algebras. More precisely, Kolesnikov~\cite{Ko17} proved a  surprising result that every trialgebra (resp. dialgebra) generated by~$X$ can be embedded into an associative algebra generated by~$X\cup \dot{X}$, where~$\dot{X}=\{\dot{x}\mid \xx\in X\}$ is a copy of~$X$, namely for every~$x$ in~$X$, the notation~$\dot{x}$ means a letter corresponding to~$x$.  Now we shall explain the embedding in more details.

As in Section~\ref{subsec-cd-ass}, denote by~$k\langle X\cup\dot{X}\rangle$ and~$k\langle X\rangle$ the free associative algebras generated by~$X\cup\dot{X}$ and~$X$ respectively. Obviously, there exists a unique homomorphism
\begin{equation}\label{eq-varphi}
\varphi: k\langle X\cup\dot{X}\rangle \rightarrow k\langle X\rangle
 \end{equation}
 induced by $x\mapsto x$ and~$\dot{x}\mapsto x$ for all~$x \in X$.  A more precise notation for~$\varphi$ might be~$\varphi_{_X}$, but to simplify the formulas we do not add the subscript because each time the underlying generating set is clear. In particualr, sometimes we do not denote the generating set by~$X$, but~$\varphi$ always means the homomorphism by removing the dot.

  If we define three binary operations~$\vdash$, $\dashv$ and~$\perp$  in~$k\langle X\cup\dot{X}\rangle $ as follows:
\begin{equation}\label{eq1}
a \vdash b=\varphi(a)b,\  a\dashv b=a\varphi(b),\  a\perp b=ab,
\end{equation}
 for all~$a,b\in k\langle X\cup\dot{X}\rangle$,  then by~\cite{Ko17},  we obtain that
 $$
 k\langle X\cup\dot{X}\rangle^{(3)}:=(k\langle X\cup\dot{X}\rangle, \vdash, \dashv, \perp)
 $$
 becomes a trialgebra and
 $$
 k\langle X\cup\dot{X}\rangle^{(2)}:=(k\langle X\cup\dot{X}\rangle, \vdash, \dashv)
 $$
 becomes a dialgebra.  Moreover, we can also introduce trialgebras or dialgebras structure on certain associative algebras defined by generators and relations. More precisely,
let~$R$ be a $\varphi$-invariant subset of~$k\langle X\cup \dot{X}\rangle$, namely, $\varphi(R)\subseteq R$.   Then  we define the three binary operations~$\vdash$, $\dashv$ and~$\perp$  in~$k\langle X\cup\dot{X}|R\rangle $ as follows:
\begin{equation} \label{otalg}
\begin{cases}
(a+\mathsf{Id}(R)) \vdash (b+\mathsf{Id}(R))=\varphi(a)b+\mathsf{Id}(R),\\  (a+\mathsf{Id}(R))\dashv(b+\mathsf{Id}(R))=a\varphi(b)+\mathsf{Id}(R),\\
(a+\mathsf{Id}(R))\perp (b+\mathsf{Id}(R))=ab+\mathsf{Id}(R),
\end{cases}
\end{equation}
for all~$a,b\in k\langle X\cup\dot{X}\rangle$.   It is straightforward to show that
\begin{equation} \label{talg}
  k\langle X\cup \dot{X}|R \rangle^{(3)}:=(k\langle X\cup \dot{X}|R\rangle, \vdash, \dashv,\perp)
\end{equation}
is a trialgebra, and
\begin{equation} \label{dalg}
 k\langle X\cup \dot{X}|R \rangle^{(2)}:=(k\langle X\cup \dot{X}|R\rangle, \vdash, \dashv)
 \end{equation}
  is a dialgebra.   Moreover, for~$t=2,3$, we have~$k\langle X\cup \dot{X}|R \rangle^{(t)}=k\langle X\cup \dot{X}|R \rangle$ as a vector space, so they have the same  linear basis.

\begin{rema}\label{psi}
Since~$\mathsf{V}^{(3)}(X)$ (resp. $\mathsf{V}^{(2)}(X)$) is  the free trialgebra (resp. dialgebra) generated by~$X$, there are algebra homomorphisms~$\Psi_3$ and~$\Psi_2$ as follows:
$$\Psi_3  : \mathsf{V}^{(3)}(X) \longrightarrow k\langle X\cup \dot{X}\rangle^{(3)},\  x\mapsto \dot{x},\ x\in X$$
and
$$\Psi_2 : \mathsf{V}^{(2)}(X) \longrightarrow k\langle X\cup \dot{X}\rangle^{(2)},\  x\mapsto \dot{x},\ x\in X.$$
To simplify the formulas, we denote them by
$$\Psi_{t}  : \mathsf{V}^{(t)}(X) \longrightarrow k\langle X\cup \dot{X}\rangle^{(t)},\  x\mapsto \dot{x},\ x\in X$$ for~$t=2,3$. By~\cite{Ko17}, we know that~$\Psi_{t} $ is an embedding of algebras. Moreover,  for every element~$f\in \Psi_{t}(\mathsf{V}^{(t)}(X))$, we denote by $\Psi_{t}^{-1}(f)$ the pre-image of~$f$. Again, a more precise notation for~$\Psi_t$ could be~$\Psi_{(t,X)}$, but to simplify the formulas, we do not add the subscript~$X$. When the generating set is not denoted by~$X$, the maps~$\Psi_2$ and~$\Psi_3$ are defined in the same way.
\end{rema}

\begin{exam}
Let~$X=\{x,y\}$ and let~$f=x\vdash(x\dashv y)\perp(y\dashv x)-x\vdash x\in\mathsf{V}^{(3)}(X)$. Then we have $\dot{X}=\{\dot{x}, \dot{y}\}$,
$$\Psi_{3}(f)=x\dot{x}y\dot{y}x-x\dot{x}\in k\langle X\cup \dot{X}\rangle$$ and
$$\varphi(\Psi_{3}(f))=xxyyx-xx\in k\langle X\rangle.$$ On the other hand, for the element~$\dot{y}xy\dot{x}x$ in~$\Psi_{3}(\mathsf{V}^{(3)}(X))$, we have
$$\Psi_{3}^{-1}(\dot{y}xy\dot{x}x)=(y\dashv x\dashv y)\perp(x\dashv x).$$
\end{exam}

Let~$S$ be a subset of~$V^{(t)}(X)$ (resp. $k\langle X\rangle$) and denote by~$\mathsf{Id}^{(t)}(S)$ (resp. $\mathsf{Id}(S)$)  the ideal of $\mathsf{V}^{(t)}(X)$ (resp. $k\langle X\rangle$) generated by $S$.   Now we recall one of the main theorems in~\cite{Ko17}, which establishes the Gr\"obner-Shirshov bases theory for trialgebras and dialgebras, respectively.

\begin{lemm} \emph{\cite[Corollary 4.3]{Ko17}} \label{emb} For~$t=2$ or $3$, the algebra
 $V^{(t)}(X|S)$ is isomorphic to the subalgebra of
$k\langle X\cup \dot{X}|\Psi_t(S)\cup \varphi(\Psi_t(S)) \rangle^{(t)}$ generated by~$\{\dot{x}+\mathsf{Id}(\Psi_{t}(S)\cup \varphi(\Psi_{t}(S)))\mid \dot{x}\in \dot{X}\}$ under the embedding
\begin{align*}
\widetilde{\Psi}_{t}: \ &\mathsf{V}^{(t)}(X|S) \mapsto k\langle X\cup \dot{X}|\Psi_{t}(S)\cup \varphi(\Psi_{t}(S)) \rangle^{(t)}, &\\ &x+\mathsf{Id}^{(t)}(S) \mapsto \dot{x}+\mathsf{Id}(\Psi_{t}(S)\cup \varphi(\Psi_{t}(S)))&
\end{align*}
for every~$x\in X$.   Moreover, if some set~$Y^{(t)} \subseteq (X\cup \dot{X})^*$ consisting of monomials satisfies that
$$\{y+\mathsf{Id}(\Psi_{t}(S)\cup \varphi(\Psi_{t}(S)))\mid y\in Y^{(t)}\}$$
 forms a linear basis for $k\langle X\cup \dot{X}|\Psi_{t}(S)\cup \varphi(\Psi_{t}(S)) \rangle^{(t)}$, then
 $$\{\Psi_{t}^{-1}(u)+\mathsf{Id}^{(t)}(S) \mid u\in Y^{(t)}\cap \Psi_{t}(\mathsf{V}^{(t)}(X))\}$$
 forms a linear basis for~$\mathsf{V}^{(t)}(X|S)$.
\end{lemm}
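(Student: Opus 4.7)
The plan is to build $\widetilde{\Psi}_t$ as the factorization of the composition
$\mathsf{V}^{(t)}(X)\xrightarrow{\Psi_t} k\langle X\cup\dot{X}\rangle^{(t)}\twoheadrightarrow k\langle X\cup\dot{X}\,|\,R\rangle^{(t)}$,
where $R:=\Psi_t(S)\cup\varphi(\Psi_t(S))$. Since $\varphi$ is idempotent, $R$ is $\varphi$-invariant, so~(\ref{otalg}) equips the quotient with a well-defined trialgebra (dialgebra) structure and the projection is itself a trialgebra (dialgebra) homomorphism. Because $\Psi_t(s)+\mathsf{Id}(R)=0$ for each $s\in S$ and the composition is a trialgebra (dialgebra) homomorphism, it annihilates the whole trialgebra (dialgebra) ideal $\mathsf{Id}^{(t)}(S)$, yielding the desired $\widetilde{\Psi}_t\colon\mathsf{V}^{(t)}(X|S)\to k\langle X\cup\dot{X}\,|\,R\rangle^{(t)}$ with $\widetilde{\Psi}_t(x+\mathsf{Id}^{(t)}(S))=\dot{x}+\mathsf{Id}(R)$.

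The substantive task is injectivity. Since $\Psi_t$ itself is already an embedding by Remark~\ref{psi}, it suffices to prove $\Psi_t^{-1}(\mathsf{Id}(R))\subseteq\mathsf{Id}^{(t)}(S)$. I would exploit the following linear picture: $\Psi_t(\mathsf{V}^{(t)}(X))$ is precisely the span of the \emph{admissible} monomials of $(X\cup\dot{X})^*$---those with at least one dotted letter when $t=3$ and exactly one dotted letter when $t=2$---so writing $N_t$ for the span of the remaining monomials gives a direct sum decomposition $k\langle X\cup\dot{X}\rangle=\Psi_t(\mathsf{V}^{(t)}(X))\oplus N_t$. Let $\pi$ denote the projection onto $\Psi_t(\mathsf{V}^{(t)}(X))$ along $N_t$. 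The key claim is
$$
\pi(\mathsf{Id}(R))\subseteq\Psi_t(\mathsf{Id}^{(t)}(S)).
$$
Granted this, for $a\in\mathsf{V}^{(t)}(X)$ with $\Psi_t(a)\in\mathsf{Id}(R)$ one has $\Psi_t(a)=\pi(\Psi_t(a))\in\Psi_t(\mathsf{Id}^{(t)}(S))$, so injectivity of $\Psi_t$ delivers $a\in\mathsf{Id}^{(t)}(S)$.

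Verifying the key claim is the main obstacle, and I would handle it on generators $urv$ with $r\in R$ and $u,v\in(X\cup\dot{X})^*$. If $urv$ is inadmissible then $\pi(urv)=0$ and there is nothing to prove. Otherwise, reading off the dotted structure of $urv$ identifies $\Psi_t^{-1}(urv)$ as an explicit trialgebra (dialgebra) word: when $r=\Psi_t(s)$ this word has $s$ sitting as a sub-expression, and a succession of the identities in~(\ref{eq00}) (notably $a\vdash(b\perp c)=(a\vdash b)\perp c$ and $a\perp(b\dashv c)=(a\perp b)\dashv c$) regroups it so that $s$ appears as a single factor inside a trialgebra product, visibly lying in $\mathsf{Id}^{(t)}(S)$; when $r=\varphi(\Psi_t(s))$ the letters of $r$ are undotted and must sit inside a $\dashv$-chain of the reconstructed word, and the identities $a\dashv(b\vdash c)=a\dashv(b\dashv c)$ and $a\dashv(b\perp c)=a\dashv(b\dashv c)$ let one reassemble $s$ (with its original $\vdash$'s and $\perp$'s restored) inside a single $\dashv$-slot, again certifying the element lies in $\mathsf{Id}^{(t)}(S)$. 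The case analysis is delicate but routine.

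Finally, for the ``moreover'' part, set $Y^{(t)}_{\mathrm{adm}}:=Y^{(t)}\cap\Psi_t(\mathsf{V}^{(t)}(X))$. Linear independence of $\{\Psi_t^{-1}(y)+\mathsf{Id}^{(t)}(S):y\in Y^{(t)}_{\mathrm{adm}}\}$ is immediate: any dependence pulls back along $\Psi_t$ to a relation $\sum\alpha_i y_i\in\Psi_t(\mathsf{Id}^{(t)}(S))\subseteq\mathsf{Id}(R)$, contradicting the basis property of $Y^{(t)}$ in the quotient. For spanning, given $a\in\mathsf{V}^{(t)}(X)$ expand $\Psi_t(a)+\mathsf{Id}(R)=\sum\alpha_i(y_i+\mathsf{Id}(R))$ with $y_i\in Y^{(t)}$ and apply $\pi$ to the relation $\Psi_t(a)-\sum\alpha_i y_i\in\mathsf{Id}(R)$: since $\pi(\Psi_t(a))=\Psi_t(a)$, $\pi(y_i)=y_i$ for $y_i\in Y^{(t)}_{\mathrm{adm}}$ and $\pi(y_i)=0$ otherwise, and $\pi(\mathsf{Id}(R))\subseteq\Psi_t(\mathsf{Id}^{(t)}(S))$, the inadmissible $y_i$ contribute nothing modulo $\mathsf{Id}^{(t)}(S)$, so the class of $a$ in $\mathsf{V}^{(t)}(X|S)$ equals $\sum_{y_i\in Y^{(t)}_{\mathrm{adm}}}\alpha_i(\Psi_t^{-1}(y_i)+\mathsf{Id}^{(t)}(S))$, completing the argument.
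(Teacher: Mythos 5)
The paper gives no proof of this lemma at all: it is imported verbatim as \cite[Corollary 4.3]{Ko17}, so there is no internal argument to measure yours against, and what you have produced is a self-contained reconstruction. It is essentially correct. Your packaging --- factor $\Psi_t$ through the quotient to get $\widetilde{\Psi}_t$, then derive both injectivity and the ``moreover'' clause from the single inclusion $\pi(\mathsf{Id}(R))\subseteq\Psi_t(\mathsf{Id}^{(t)}(S))$, where $\pi$ projects onto the span of admissible monomials --- is a clean linear-algebra substitute for Kolesnikov's replicated-operad machinery. The direct-sum decomposition is legitimate because the Loday normal forms identify the admissible monomials (at least one dot for $t=3$, exactly one for $t=2$) as exactly the monomial basis of $\Psi_t(\mathsf{V}^{(t)}(X))$, and for each generator $r\in R$ the dot-count is uniform across the monomials of $urv$, so $\pi(urv)$ really is $urv$ or $0$ as you assert; the final two paragraphs then go through verbatim. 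The only thin spot is the key inclusion itself, and there your phrasing (``$\Psi_t^{-1}(urv)$ has $s$ sitting as a sub-expression'') does not quite parse, since $s$ is a linear combination whose monomials carry different dot patterns. The clean fix is operator-theoretic rather than identity-chasing: peel the letters of $u$ and $v$ off one at a time, noting that in $k\langle X\cup\dot X\rangle^{(t)}$ prepending $x$ (resp.\ $\dot x$) is the operator $\dot x\vdash(-)$ (resp.\ $\dot x\perp(-)$) and appending $x$ (resp.\ $\dot x$) is $(-)\dashv\dot x$ (resp.\ $(-)\perp\dot x$), all with multipliers in $\Psi_t(\mathsf{V}^{(t)}(X))$; hence $u\,\Psi_t(s)\,v=\Psi_t(c)$ for an iterated product $c$ having $s$ as a factor, and ideal membership is automatic without invoking \eqref{eq00} at all. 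For $r=\varphi(\Psi_t(s))$ one absorbs $r$ as $(-)\dashv\Psi_t(s)$ or $\Psi_t(s)\vdash(-)$ according to whether the dotted letter that makes $urv$ admissible lies to the left or to the right of $r$; you describe only the $\dashv$ case, so the symmetric $\vdash$ case should be stated. With that supplied, the proof is complete.
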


\section{Free products of trialgebras and trioids}\label{s-pro}
This section contains the main result of this article.  We shall first construct free products of arbitrary  trialgebras or dialgebras. And if all the trialgebras (resp. dialgebras) are trioid algebras (resp. dimonoid algebras), then a linear basis of the corresponding free product consisting of monomials forms a set of normal forms for the corresponding free products of the trioids (resp. dimonoids).

We first recall the definition of free products of trialgebras (resp. dialgebras, trioids, dimonoids).
\begin{defi}
Let $\{T_i \mid i\in I\}$ be a family of indexed trialgebras (resp. dialgebras, trioids, dimonoids), and let $T$ be a trialgebra  (resp. dialgebra, trioid, dimonoid) such that\\
\ITEM1 there exists an injective trialgebra  (resp. dialgebra, trioid, dimonoid) homomorphism $\tau_i: T_i\longrightarrow T$ for every~$i\in I$;\\
\ITEM2 if $T'$ is a trialgebra  (resp. dialgebra, trioid, dimonoid), and if there exists a trialgebra  (resp. dialgebra, trioid, dimonoid) homomorphism $\phi_i: T_i\rightarrow T'$ for every~$i\in I$, then there exists a unique trialgebra  (resp. dialgebra, trioid, dimonoid)  homomorphism $\eta: T\rightarrow T'$ such that the diagram
$$
\xymatrix{
  T_i \ar[rr]^{\tau_i} \ar[dr]_{\phi_i}
                &  &    T \ar[dl]^{\eta}    \\
                & T'                 }
$$
commutes for every~$i\in I$.

Then $T$ is called a \textit{free product} of $\{T_i\mid i\in I\}$.
\end{defi}

Obviously, the free product of  the family~$\{T_i\mid i\in I\}$ of trialgebras  (resp. dialgebras, trioids, dimonoids) is unique up to isomorphism.

Let $\{T_i\mid i\in I\}$ be a family of trialgebras (resp. dialgebras, trioids, dimonoids).
If for every~$i\in I$, $T_i$ is isomorphic to~$T_i'$, then the free product of~$\{T_i'\mid i\in I\}$ is isomorphic to that of~$\{T_i\mid i\in I\}$. Therefore, when one constructs the free product of arbitrary trialgebras (resp. dialgebras)~$\{T_i\mid i\in I\}$, one may always assume that~$T_i\cap T_j=\{0\}$ for all~$i\neq j\in I$. Similarly, when one constructs the free product of  trioids (resp. dimonids)~$\{T_i\mid i\in I\}$, one can always assume~$T_i\cap T_j=\emptyset$ for all~$i\neq j\in I$.

 Let $T=(T,\vdash, \dashv, \perp)$ be an arbitrary trioid and  let $0$ be a new letter that does not lie in~$T$. We define a trioid $T^0=(T\cup \{0\},\vdash, \dashv, \perp)$ as follows: for all $x,y\in T$ and~$\delta\in\{\vdash, \dashv, \perp\}$, define
 $$
x\delta y=
 \begin{cases}
\{x\delta y\},  \ \ \text{ if } x,y \text{ lie in } T,\\
0 \ \ \ \ \ \ \ \ \text{otherwise.}
\end{cases}
 $$
Then it is clear that $T^0$ is a trioid  and~$T$ is a subtrioid of~$T^0$.

 Now we show that one can easily construct free products of arbitrary  trioids by generators and relations.
\begin{lemm}\label{thm-fpt}
 Let~$\{X_i\mid i\in I\}$ be a pairwise disjoint set   and let $\{\mathsf{Vid}^{(3)}(X_i| S_i)\mid i\in I\}$ be a family of trioids, where~$I$ is an index set. Then  $\mathsf{Vid}^{(3)}(\cup_{i\in I}X_i|\cup_{i\in I}S_i)$  is the free product of trioids~$\{\mathsf{Vid}^{(3)}(X_i| S_i)\mid i\in I\}$.
\end{lemm}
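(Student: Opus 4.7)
The plan is to handle the two parts of the free-product definition separately, and to reduce the injectivity of the canonical maps $\tau_i$ to Lemma~\ref{ass-gsb} by way of Lemma~\ref{emb}.

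First I would verify the universal property. Set $T := \mathsf{Vid}^{(3)}(\cup_i X_i | \cup_i S_i)$ and define $\tau_i$ on generators by sending $x + \rho_{S_i}$ to $x + \rho_{\cup_j S_j}$ for $x \in X_i$; the disjointness of the $X_j$ together with the inclusion $\rho_{S_i} \subseteq \rho_{\cup_j S_j}$ make this a well-defined trioid homomorphism. Given a family of trioid homomorphisms $\phi_i : \mathsf{Vid}^{(3)}(X_i|S_i) \to T'$, I combine the $\phi_i$ into a single set-map on $\cup_i X_i$; the universal property of the free trioid $\mathsf{Vid}^{(3)}(\cup_i X_i)$ extends this to a trioid homomorphism $\tilde\phi$, and since each $\phi_i$ factors through $\rho_{S_i}$ by hypothesis, $\tilde\phi$ collapses every pair in $\cup_i S_i$ and hence factors through a unique $\eta : T \to T'$ making the required diagram commute. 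Uniqueness of $\eta$ is automatic because it is determined on generators.

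The main obstacle is showing each $\tau_i$ is injective. The strategy is to linearize the trioid relations and lift the problem to the associative level. Let $kT_i$ and $kT$ denote the trioid algebras of $\mathsf{Vid}^{(3)}(X_i|S_i)$ and $T$; these are trialgebras in which $T_i$ and $T$ respectively sit as $k$-bases, and a direct presentation calculation gives
$$
kT_i \cong \mathsf{V}^{(3)}(X_i | \tilde S_i), \qquad kT \cong \mathsf{V}^{(3)}\bigl(\cup_j X_j \bigm| \cup_j \tilde S_j\bigr),
$$
where $\tilde S_i := \{u - v \mid (u,v) \in S_i\}\subseteq \mathsf{V}^{(3)}(X_i)$. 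Applying Lemma~\ref{emb} on both sides embeds these trialgebras into the associative algebras
$$
A_i := k\langle X_i \cup \dot X_i | \Psi_3(\tilde S_i) \cup \varphi(\Psi_3(\tilde S_i)) \rangle, \quad A := k\bigl\langle \cup_j(X_j \cup \dot X_j) \bigm| \cup_j(\Psi_3(\tilde S_j) \cup \varphi(\Psi_3(\tilde S_j))) \bigr\rangle,
$$
and Lemma~\ref{ass-gsb}, applied to the pairwise disjoint generating sets $X_i \cup \dot X_i$, exhibits each $A_i$ as a subalgebra of $A$.

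The most delicate step is checking that the embeddings $\widetilde\Psi_3$ supplied by Lemma~\ref{emb} for $kT_i$ and for $kT$ fit into a commutative square with the inclusion $A_i \hookrightarrow A$, so that injectivity of the bottom arrow transfers to the top. This is essentially a naturality check on the maps $\Psi_3$ and $\varphi$ from Remark~\ref{psi} under enlargement of the generating set; once verified, the map $kT_i \to kT$ is forced to be injective, and precomposing with the canonical inclusion $T_i \hookrightarrow kT_i$ yields injectivity of $\tau_i$, completing the proof.
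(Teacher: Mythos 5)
Your treatment of the universal property matches the paper's argument step for step and is fine. The problem lies in the injectivity argument. You invoke Lemma~\ref{ass-gsb} to exhibit each $A_i$ as a subalgebra of $A$, but the hypothesis of that lemma is that each relation set $U_i$ is a Gr\"obner--Shirshov basis for $k\langle Y_i\mid U_i\rangle$, and the sets $\Psi_3(\tilde S_i)\cup\varphi(\Psi_3(\tilde S_i))$ you feed into it are in general \emph{not} Gr\"obner--Shirshov bases: compositions among the elements of $\varphi(\Psi_3(\tilde S_i))$ need not be trivial. That is precisely why the paper later completes $\varphi(\Psi_t(S_i))$ to a basis $R_{i,t}$ and only then proves (Lemma~\ref{lemm-gsb} and Corollary~\ref{coro-gsb-at}) that $\Psi_t(S_i)\cup R_{i,t}$ and its union over $i\in I$ are Gr\"obner--Shirshov bases. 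So as written the appeal to Lemma~\ref{ass-gsb} is unjustified; to repair it you would either have to import that completion machinery (which in the paper is developed only \emph{after} this lemma and itself relies on the embedding results), or replace the citation by a direct retraction argument at the associative level (send every generator outside $X_i\cup\dot X_i$ to $0$, which is legitimate because the relations have no constant terms). The remaining ingredients of your argument --- the identification $kT\cong\mathsf V^{(3)}(\cup_j X_j\mid\cup_j\tilde S_j)$ and the commutativity of the naturality square for $\widetilde\Psi_3$ --- are true but are asserted rather than verified.

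The paper's own proof of injectivity is far more economical and never leaves the trioid level: having established the universal property, it applies it to the family of homomorphisms $\phi_j'\colon\mathsf{Vid}^{(3)}(X_j\mid S_j)\to T_i^0$ that act injectively for $j=i$ and send everything to the adjoined zero element for $j\neq i$. The induced $\eta'$ satisfies $\eta'\tau_i=\phi_i'$ with $\phi_i'$ injective, so $\tau_i$ is injective. This retraction trick removes any dependence on Lemmas~\ref{emb} and~\ref{ass-gsb}, and you should adopt it (or its associative-algebra analogue) in place of the linearization detour.
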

\begin{proof}
 For every letter~$x\in \cup_{i\in I}X_i$,  define~$\mathsf{ind}(x)=i$ if~$x$ lies in~$X_i$.
Let~$T$  be an arbitrary trioid such that there exists a family of homomorphisms  $
\phi_i$ from $\mathsf{Vid}^{(3)}(X_i| S_i)$ to $T$, $i\in I$. Denote by~$\rho$ the congruence of~$\mathsf{Vid}^{(3)}(\cup_{i\in I}X_i)$ generated by~$\cup_{i\in I}S_i$, and denote by~$\rho_i$ the congruence of~$\mathsf{Vid}^{(3)}(X_i)$ generated by~$S_i$.
 Define a map
$$\theta: \cup_{i\in I}X_i \longrightarrow T,  \ x\mapsto \phi_{\mathsf{ind}(x)}(x\rho_{\mathsf{ind}(x)})
$$
 for every~$x\in \cup_{i\in I}X_i$.
Since~$\mathsf{Vid}^{(3)}(\cup_{i\in I}X_i)$ is the free trioid generated by~$\cup_{i\in I}X_i$, there exists a  homomorphism
$$\widetilde{\theta}: \mathsf{Vid}^{(3)}(\cup_{i\in I}X_i) \longrightarrow T, \ x\mapsto \phi_{\mathsf{ind}(x)}(x\rho_{\mathsf{ind}(x)})  $$
 for every~$x\in \cup_{i\in I}X_i$.
Clearly, for every~$(a,b)\in \cup_{i\in I}S_i$, there exists some index~$i\in I$ such that~$(a,b)\in S_i$. So we have~$\phi_i(a\rho_i)=\phi_i(b\rho_i)$, which means that~$\widetilde{\theta}(a)=\widetilde{\theta}(b)$. Thus, there exists a homomorphism
$$\eta: \mathsf{Vid}^{(3)}(\cup_{i\in I}X_i|\cup_{i\in I}S_i) \longrightarrow T, \ x\rho\mapsto \phi_{\mathsf{ind}(x)}(x\rho_{\mathsf{ind}(x)}) $$
 for every~$x\in \cup_{i\in I}X_i$.

Define~$\tau_i$ to be the  homomorphism induced by
$$
\tau_i: \mathsf{Vid}^{(3)}(X_i| S_i) \longrightarrow \mathsf{Vid}^{(3)}(\cup_{i\in I}X_i|\cup_{i\in I}S_i), \ x\rho_i \mapsto x\rho,\ \ \ x\in X_i.
$$
Note that for every $i\in I$, we have~$\rho_i\subseteq\rho$.  So $\tau_i$ is well-defined
  and we have~$\eta \tau_i=\phi_i$ for every~$i\in I$. Such a homomorphism $\eta$ is clearly unique.

   It remains to show that~$\tau_i$ is injective for every~$i\in I$.
For every~$j\in I$, we define a homomorphism~$\phi_j'$ as follows:
$$
\phi_j': \mathsf{Vid}^{(3)}(X_j| S_j)\rightarrow T^0_i, \ x\rho_j\mapsto  x\rho_i \mbox{ if } j=i; \mbox{ and } x\rho_j \mapsto  0 \mbox{ if } j\neq i
 $$
 for every~$x\in X_j$.   Clearly, $\phi_i'$ is injective. Then by the above reasoning,  there exists a homomorphism $\eta'$ such that~$\eta' \tau_j=\phi_j'$ for all $j\in I$. In particular, we have~$\eta' \tau_i=\phi_i'$ and thus $\tau_i$ is injective.
\end{proof}

A similar result holds for dimonoids, dialgebras and trialgebras. We formulate it for trialgebras and dialgebras without proof.

\begin{lemm}\label{thm-fp}
  Let~$\{X_i\mid i\in I\}$ be a pairwise disjoint set  and let $\{\mathsf{V}^{(3)}(X_i|S_i)\mid i\in I \}$  $($resp. $\{\mathsf{V}^{(2)}(X_i|S_i)\mid i\in I \})$ be a family of trialgebras $($resp. dialgebras$)$, where~$I$ is an index set.
 Then~$\mathsf{V}^{(t)}(\cup_{i\in I}X_i|\cup_{i\in I}S_i )$  is the free product of $\{\mathsf{V}^{(t)}(X_i|S_i)\mid i\in I\}$, $t\in \{2,3\}$.  In particular, for every~$i\in I$,  the algebra~$\mathsf{V}^{(t)}(X_i|S_i)$ embeds into~$\mathsf{V}^{(t)}(\cup_{i\in I}X_i|\cup_{i\in I}S_i )$.
\end{lemm}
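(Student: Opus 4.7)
The plan is to mimic, almost verbatim, the proof of Lemma \ref{thm-fpt}, with the free trioid $\mathsf{Vid}^{(3)}$, the congruences $\rho$ and $\rho_i$, and trioid homomorphisms replaced by the free trialgebra (resp. dialgebra) $\mathsf{V}^{(t)}$, the ideals $\mathsf{Id}^{(t)}(\cup_{i\in I}S_i)$ and $\mathsf{Id}^{(t)}(S_i)$, and trialgebra (resp. dialgebra) homomorphisms. For each $x\in\cup_{i\in I}X_i$ set $\mathsf{ind}(x)=i$ when $x\in X_i$.

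Given (di/tri)algebra homomorphisms $\phi_i:\mathsf{V}^{(t)}(X_i|S_i)\to T'$ for all $i\in I$, the freeness of $\mathsf{V}^{(t)}(\cup_{i\in I}X_i)$ yields a homomorphism $\widetilde{\theta}:\mathsf{V}^{(t)}(\cup_{i\in I}X_i)\to T'$ determined by $x\mapsto \phi_{\mathsf{ind}(x)}(x+\mathsf{Id}^{(t)}(S_{\mathsf{ind}(x)}))$. For every $s\in S_i$ one has $\widetilde{\theta}(s)=\phi_i(s+\mathsf{Id}^{(t)}(S_i))=0$, so $\widetilde{\theta}$ annihilates $\cup_{i\in I}S_i$ and therefore factors through $\mathsf{Id}^{(t)}(\cup_{i\in I}S_i)$, producing the desired $\eta:\mathsf{V}^{(t)}(\cup_{i\in I}X_i|\cup_{i\in I}S_i)\to T'$. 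Since $\mathsf{Id}^{(t)}(S_i)\subseteq \mathsf{Id}^{(t)}(\cup_{i\in I}S_i)$, the natural map $\tau_i$ sending $x+\mathsf{Id}^{(t)}(S_i)$ to $x+\mathsf{Id}^{(t)}(\cup_{i\in I}S_i)$ for $x\in X_i$ is a well-defined homomorphism with $\eta\tau_i=\phi_i$; uniqueness of $\eta$ is immediate since the union of the images of the $\tau_i$'s generates the codomain.

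For the injectivity of $\tau_i$, the key simplification compared with the trioid case is that algebras already contain a zero element, so no ``adjoin $0$'' construction is needed. Fix $i\in I$ and define $\phi_j':\mathsf{V}^{(t)}(X_j|S_j)\to \mathsf{V}^{(t)}(X_i|S_i)$ to be the identity map when $j=i$ and the zero homomorphism otherwise; the zero map is trivially a (di/tri)algebra homomorphism because each binary operation sends any factor equal to $0$ to $0$. Applying the universal property just established to the family $\{\phi_j'\mid j\in I\}$ gives a homomorphism $\eta'$ with $\eta'\tau_i=\phi_i'=\mathrm{id}$, so $\tau_i$ is injective.

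I do not anticipate any serious obstacle: the argument is a clean transcription of the proof of Lemma \ref{thm-fpt}, the only point worth checking being that passing from a congruence quotient to an ideal quotient changes nothing essential, since the universal property of $\mathsf{V}^{(t)}(X)$ and the containment $\mathsf{Id}^{(t)}(S_i)\subseteq \mathsf{Id}^{(t)}(\cup_{i\in I}S_i)$ are the only ingredients used.
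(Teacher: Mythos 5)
Your proof is correct and is exactly the adaptation the paper intends: it omits the proof of this lemma precisely because it mirrors that of Lemma~\ref{thm-fpt}, with congruences replaced by ideals. Your observation that the $T^0$ construction is unnecessary in the linear setting (the zero map already being a trialgebra/dialgebra homomorphism) is a valid and correct simplification of the injectivity step.
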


  By Lemmas~\ref{thm-fpt}, \ref{thm-fp} and~\ref{emb}, in order to construct a set of normal forms of elements for a free product of trialgebras (resp. dialgebras, dimonoids, trioids), we can apply the method of Gr\"obner-Shirshov bases theory for trialgebras (resp. dialgebras, dimonoids, trioids).

Now  we  recall a well order on the free monoid~$(\cup_{i\in I}(X_i\cup\dot{X_i}))^*$ generated by~$(\cup_{i\in I}(X_i\cup\dot{X_i}))$.
Assume that each~$X_i$ is a well-ordered set for every $i\in I$ and assume that~$I$ is well-ordered. Now we extend the well orders on the sets $X_i$ to~$(\cup_{i\in I}(X_i\cup\dot{X_i}))^+$ as follows: for all~$x,y\in\cup_{i\in I}X_i$, define

   \ITEM1 ~$x>y$ if $x\in X_i$,  $y\in X_j$, $i,j\in I$ with $i> j$;

  \ITEM2 ~$\dot{x}> \dot{y}$ if~$x> y$;

  \ITEM3   $\dot{x}>y$. \\
  Then we obtain a well order on $\cup_{i\in I}(X_i\cup\dot{X_i})$.  Finally, let  $>$ be the degree-lexicographic order recalled in Section~\ref{subsec-cd-ass} on~$(\cup_{i\in I}(X_i\cup\dot{X_i}))^{\ast}$.

To avoid too many repetitions on the notations, we shall fix some notations as follows.

\begin{rema}\label{nota-relation}
 Let~$\{X_i\mid i\in I\}$ be a pairwise disjoint set, and let $\{\mathsf{V}^{(t)}(X_i|S_i)\mid i\in I \}$ be a family of dialgebras when~$t=2$ (resp. trialgebras when~$t=3$) such that    $\{x+\mathsf{Id}^{(t)}(S_i)\mid x\in X_i\}$  is a linear basis for~$\mathsf{V}^{(t)}(X_i|S_i)$ and $S_i$ is the multiplication table for~$\mathsf{V}^{(t)}(X_i|S_i)$.    For every letter~$x\in \cup_{i\in I}(X_i\cup \dot{X}_i)$,  define~$\mathsf{ind}(x)=i$ if~$x$ lies in~$X_i\cup \dot{X}_i$.   Recall that~$\varphi$ is defined by the rule in~\eqref{eq-varphi} and~$\Psi_t$,  $t\in \{2,3\}$ is defined by the rule  in Remark~\ref{psi}. (Note that in this situation the generating sets are not denoted by~$X$, but hopefully no confusions will arise.) Define
 $$\mathcal{B}_{i,t}=k\langle X_i|\varphi(\Psi_t(S_i))\rangle,$$
$$\mathcal{A}_{i,t}=k\langle X_i\cup\dot{X_i}|\Psi_t(S_i)\cup \varphi(\Psi_t(S_i)) \rangle \rangle,$$
and $$\mathcal{A}_t=k\langle\cup_{i\in I}(X_i\cup\dot{X_i})|\cup_{i\in I}(\Psi_t(S_i)\cup \varphi(\Psi_t(S_i))) \rangle. $$
Finally, suppose that~$R_{i,t}$ is a Gr\"obner-Shirshov basis for~$\mathcal{B}_{i,t}$ with respect to the degree-lexicographic order~$>$ on $X_i^*$.   Without loss of generality, we may assume $\varphi(\Psi_t(S_i))\subseteq R_{i,t}$.  Define $X_i'=X_i^*\setminus X_i^*\overline{R_{i,t}}X_i^*$. Then we have~$X_i'=X_i\setminus\{\bar{f}\mid f\in R_{i,t}\}$. By Lemma~\ref{l1}, the set~$ \mathsf{Irr}(R_{i,t})=\{x+\mathsf{Id}(R_{i,t})\mid x\in X_i'\}$ forms a linear basis for~$\mathcal{B}_{i,t}$.
\end{rema}

\begin{lemm}\label{lemm-gsb}
  With the notations in Remark~\ref{nota-relation}, the set~$\Psi_t(S_i)\cup R_{i,t}$ forms a Gr\"obner-Shirshov basis for~$\mathcal{A}_{i,t}$.
\end{lemm}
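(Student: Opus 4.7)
The plan is to apply Lemma~\ref{l1} by verifying that every composition of a pair of elements of $\Psi_t(S_i)\cup R_{i,t}$ is trivial. A preliminary observation is that, since $R_{i,t}$ is a Gr\"obner-Shirshov basis for $\mathcal{B}_{i,t}$ and $\varphi(\Psi_t(S_i))\subseteq R_{i,t}$, one has $R_{i,t}\subseteq \mathsf{Id}(\varphi(\Psi_t(S_i)))$ and so $\mathsf{Id}(\Psi_t(S_i)\cup R_{i,t}) = \mathsf{Id}(\Psi_t(S_i)\cup \varphi(\Psi_t(S_i)))$; hence the ambient quotient is indeed $\mathcal{A}_{i,t}$. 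Now every leading monomial in $\Psi_t(S_i)$ has length two and is of the form $x\dot{y}$, $\dot{x}y$, or $\dot{x}\dot{y}$ (coming from $\vdash$, $\dashv$, $\perp$ respectively), while every leading monomial in $R_{i,t}$ is a word in $X_i^{\ast}$, so the possible overlaps are very constrained.

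Compositions inside $R_{i,t}$ are trivial by assumption and stay trivial when viewed in the larger algebra $k\langle X_i\cup \dot{X}_i\rangle$. The intersection compositions inside $\Psi_t(S_i)$ split into exactly five overlap patterns in one-to-one correspondence with the nontrivial compatibility identities in Definition~\ref{l2}. For each pattern I would rewrite every length-two subword $\dot{a}b$, $a\dot{b}$, or $\dot{a}\dot{b}$ via the appropriate element of $\Psi_t(S_i)$ until only a $\Psi_t$-image of a single trialgebra element is left; the associated identity of Definition~\ref{l2}, combined with the fact that $X_i$ is a basis of $\mathsf{V}^{(t)}(X_i|S_i)$, forces this surviving $\Psi_t$-image to vanish, and so the composition is a linear combination of terms $u'sv'$ with $u'\overline{s}v'$ strictly smaller than the composition's leading monomial.

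The crucial step is the mixed compositions. A short analysis of where the dotted letters of $\bar{f}$ can sit inside $v\bar{r}$ or $\bar{r}u$ shows that the only intersections are of two dual types: (i) $\bar{f}=\dot{x}y$ overlapping a leading monomial $\bar{r}$ that begins with $y$, and (ii) $\bar{f}=x\dot{y}$ overlapping $\bar{r}$ that ends with $x$. For type (i), I would take the composition $fu-\dot{x}r$, successively rewrite every pattern $\dot{x}_m w_1$ in the two summands via $\Psi_t(S_i)$, and observe that the residue equals $-\Psi_t(x\dashv \mu(r))$, where $\mu\colon k\langle X_i\rangle\to \mathsf{V}^{(t)}(X_i|S_i)$ is the algebra homomorphism into the associative algebra $(\mathsf{V}^{(t)}(X_i|S_i),\dashv)$ sending $w_1\cdots w_n$ to $w_1\dashv\cdots\dashv w_n$. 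The identities $a\dashv(b\vdash c)=a\dashv(b\dashv c)$ and $a\dashv(b\perp c)=a\dashv(b\dashv c)$ of Definition~\ref{l2} yield $a\dashv\mu(g)=0$ for every generator $g=xy-f_{\delta}(x,y)$ of $\mathsf{Id}(\varphi(\Psi_t(S_i)))$ and every $a\in \mathsf{V}^{(t)}(X_i|S_i)$; since $r$ lies in this ideal, one concludes $x\dashv\mu(r)=0$, and the composition is trivial. Case (ii) is handled symmetrically using the identities $(a\dashv b)\vdash c=(a\vdash b)\vdash c$ and $(a\perp b)\vdash c=(a\vdash b)\vdash c$, which give $\mu_{\vdash}(r)\vdash y=0$ where $\mu_{\vdash}$ is the analogous homomorphism built from $\vdash$.

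The main obstacle is the bookkeeping in the two mixed cases: one must perform the successive reductions of the $\dot{x}_m w_1$ patterns carefully enough to simultaneously guarantee that each correction term produced has leading monomial strictly less than the composition's leading monomial, and to identify the final residue precisely with $\Psi_t(x\dashv \mu(r))$ or $\Psi_t(\mu_{\vdash}(r)\vdash y)$ so that the trialgebra identities can be invoked in the last step. Once these points are settled, triviality of every composition follows, and Lemma~\ref{l1} yields that $\Psi_t(S_i)\cup R_{i,t}$ is a Gr\"obner-Shirshov basis for $\mathcal{A}_{i,t}$.
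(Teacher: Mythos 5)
Your overall strategy is sound, but at the decisive step you take a genuinely different route from the paper. You handle each composition involving dotted letters by an explicit case analysis (five overlap patterns inside $\Psi_t(S_i)$, two dual mixed types), reduce the residue to a $\Psi_t$-image of a single trialgebra expression such as $\Psi_t(x\dashv \mu(r))$, and then kill it with the specific identities of Definition~\ref{l2} together with the hypothesis that $X_i$ is a basis of $\mathsf{V}^{(t)}(X_i|S_i)$. The paper avoids all of this enumeration with one uniform observation: any composition involving a dotted letter has every monomial containing a dotted letter, so the multiplication-table relations in $\Psi_t(S_i)$ rewrite it as $\sum_l \gamma_l u_l f_l v_l+\sum_n\delta_n \dot{z}_n'$ with $u_l\overline{f_l}v_l<w$ and each $\dot z_n'\in\dot X_i$; then Kolesnikov's embedding theorem (Lemma~\ref{emb}) plus the basis hypothesis says the classes $\dot{x}+\mathsf{Id}(\Psi_t(S_i)\cup R_{i,t})$ are linearly independent, which forces all $\delta_n=0$. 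What your approach buys is independence from Lemma~\ref{emb} at this point and an explicit accounting of which trialgebra identity resolves which ambiguity; what the paper's approach buys is brevity and immunity to errors in the overlap bookkeeping, since it never needs to know the precise shape of the composition.

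One concrete hole in your case analysis: in the mixed case you only consider intersection compositions, but inclusion compositions $(f,r)_{\bar f}$ with $f\in\Psi_t(S_i)$ and $\bar f=u\,\bar r\,v$ do occur, because $R_{i,t}$ may contain elements whose leading monomial is a single letter of $X_i$ (the associated associative algebra $\mathcal{B}_{i,t}$ generally identifies distinct basis elements, e.g.\ when $x\vdash y\neq x\dashv y$ in the trialgebra; this is exactly why the paper introduces $X_i'=X_i\setminus\{\bar f\mid f\in R_{i,t}\}$). For instance $\bar f=\dot x y$ contains $\bar r=y$ when $y\notin X_i'$. These compositions are disposed of by exactly the same reduction-to-$\Psi_t(x\dashv\mu(r))$ argument you use for type (i), so the fix is immediate, but as written your enumeration is incomplete. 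With that case added, and the bookkeeping you flag carried out (each correction term $h v'$ produced while rewriting $\dot c v_1 v'$ has $\overline{h}v'$ equal to the monomial being rewritten, hence strictly below $w$), your proof goes through.
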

\begin{proof}
Since~$R_{i,t}$ is a Gr\"obner-Shirshov basis for~$\mathcal{B}_{i,t}=k\langle X_i|\varphi(\Psi_t(S_i))\rangle$,  the sets~$R_{i,t}$ and $\varphi(\Psi_t(S_i))$ generate the same ideal of~$k\langle X_i\rangle$. It follows immediately that we have
$$\varphi(\Psi_t(S_i))\subseteq k\langle X_i\rangle R_{i,t} k\langle X_i\rangle\subseteq
k\langle X_i\cup\dot{X_i}\rangle R_{i,t} k\langle X_i\cup\dot{X_i}\rangle$$ and
$$R_{i,t}\subseteq
k\langle X_i\cup\dot{X_i}\rangle \varphi(\Psi_t(S_i))k\langle X_i\cup\dot{X_i}\rangle. $$
So we obtain
  $$\mathcal{A}_{i,t}=k\langle X_i\cup\dot{X_i}|\Psi_t(S_i)\cup \varphi(\Psi_t(S_i)) \rangle \rangle= k\langle X_i\cup\dot{X_i}|\Psi_t(S_i)\cup R_{i,t} \rangle.$$
For all~$f,g\in \Psi_t(S_i)\cup R_{i,t}$,  we shall show that all the possible compositions of the form~$(f,g)_w$ for some~$w\in (\dot{X_i}\cup X_i)^{\ast}$ (if any) are trivial, and then the lemma follows.

First of all, if no  letters   in~$\dot{X_i}$ are involved in the composition~$(f,g)_w$, then it follows that~$f,g$ lie in~$R_{i,t}$. Since~$R_{i,t}$ is a Gr\"obner-Shirshov basis for~$\mathcal{B}_{i,t}$,  it follows that the composition~$(f,g)_w$ can be written as a linear combination of elements of the form~$uhv$ satisfying~$u\overline{h}v<w$, where all~$u,v$ lie in~$(X_i)^*\subseteq (X_i\cup\dot{X_i})^*$ and each~$h$ lies in~$R_{i,t}\subseteq \Psi_t(S_i)\cup R_{i,t}$. Therefore, the composition~$(f,g)_w$ is trivial.

On the other hand, if some letter in~$\dot{X_i}$ is involved in the compostion~$(f,g)_w$, then $f$ or~$g$ lies in~$\Psi_t(S_i)$.  And by the definition of the composition~$(f,g)_w$, every monomial that appears in~$(f,g)_w$ (with nonzero coefficient) must involves some letter in~$\dot{X_i}$. Suppose that~$(f,g)_w=\sum_{n}\alpha_nw_n$ with~$w>\overline{(f,g)_w}=w_1>w_2>\cdots$.
Since~$S_i$ is the multiplication table for~$\mathsf{V}^{(t)}(X_i|S_i)$, we know that every element in~$\Psi_t(S_i)$ is of the form~$y_py_q=\sum_{m}\beta_{m}z_m$, where~$y_p,y_q$ lie in~$X_i\cup \dot{X_i}$ and at least one of~$y_p,y_q$ lies in~$\dot{X_i}$, moreover, every~$z_m$ lies in~$\dot{X_i}$. It follows that we can rewrite~$(f,g)_w$ into the form
$$\sum_{l}\gamma_lu_lf_lv_l+\sum_n\delta_nz_n',$$ where each~$z_n'$ lies in~$\dot{X_i}$, each~$f_l$ lies in~$\Psi_t(S_i)$, all~$u_l,v_l$ lie in~$(X_i\cup \dot{X_i})^{\ast}$, and we have~$u_l\overline{f_l}v_l\leq w_1<w$. (For instance, say $(f,g)_w=x_1\dot{x_2}x_3-\dot{x_3}$,~$f_1=\dot{x_2}x_3-\dot{x_1}\in \Psi_t(S_i)$ and $f_2=x_1\dot{x_1}-\dot{x_3}\in \Psi_t(S_i)$, then we have~$(f,g)_w=x_1f_1+f_2.$)

By Lemma~\ref{emb},  it follows that~$\mathsf{V}^{(t)}(X_i|S_i)$ embeds into~$\mathcal{A}_{i,t}^{(t)}=k\langle X_i\cup \dot{X_i}|\Psi_t(S_i)\cup R_{i,t}\rangle^{(t)}$. Since~$\{x+\mathsf{Id}^{(t)}(S_i)\mid x\in X_i\}$ forms a linear basis for~$\mathsf{V}^{(t)}(X_i|S_i)$, we obtain that its image
$\{\dot{x}+\mathsf{Id}(\Psi_t(S_i)\cup R_{i,t})\mid \dot{x}\in \dot{X}_i\}$ is linear independent,
and thus every coefficient~$\delta_n$ is zero.   Therefore, the set~$\Psi_t(S_i)\cup R_{i,t}$ forms a Gr\"obner-Shirshov basis for~$\mathcal{A}_{i,t}$.
\end{proof}

\begin{coro} \label{coro-gsb-at}
   With the notations in Remark~\ref{nota-relation}, the set~$\cup_{i\in I}(\Psi_t(S_i)\cup R_{i,t})$ forms a Gr\"obner-Shirshov basis for~$\mathcal{A}_t$.
\end{coro}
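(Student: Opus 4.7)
The plan is to derive the corollary as an immediate consequence of Lemma~\ref{lemm-gsb} combined with the earlier Lemma~\ref{ass-gsb}. The key observation is that the family $\{X_i\cup\dot{X_i}\mid i\in I\}$ is pairwise disjoint (since $\{X_i\mid i\in I\}$ is pairwise disjoint by assumption and the $\dot{X_i}$ are disjoint copies of disjoint sets), and each component algebra $\mathcal{A}_{i,t}$ already carries a Gr\"obner--Shirshov basis by Lemma~\ref{lemm-gsb}; so the union of these bases should automatically be a Gr\"obner--Shirshov basis for the ``free product'' algebra $\mathcal{A}_t$.

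First I would reconcile the two presentations of $\mathcal{A}_t$. By definition,
$$\mathcal{A}_t=k\langle\cup_{i\in I}(X_i\cup\dot{X_i})\mid\cup_{i\in I}(\Psi_t(S_i)\cup\varphi(\Psi_t(S_i)))\rangle,$$
whereas Lemma~\ref{lemm-gsb} uses the presentation via $\Psi_t(S_i)\cup R_{i,t}$. Since $R_{i,t}$ and $\varphi(\Psi_t(S_i))$ generate the same ideal of $k\langle X_i\rangle$ (as already exploited in the proof of Lemma~\ref{lemm-gsb}), they also generate the same two-sided ideal inside $k\langle X_i\cup\dot{X_i}\rangle$, and therefore inside $k\langle\cup_{i\in I}(X_i\cup\dot{X_i})\rangle$. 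Taking unions over $i\in I$ yields
$$\mathcal{A}_t=k\langle\cup_{i\in I}(X_i\cup\dot{X_i})\mid\cup_{i\in I}(\Psi_t(S_i)\cup R_{i,t})\rangle,$$
which puts the statement into a form directly amenable to Lemma~\ref{ass-gsb}.

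Next, setting $Y_i:=X_i\cup\dot{X_i}$ and $U_i:=\Psi_t(S_i)\cup R_{i,t}$, Lemma~\ref{lemm-gsb} says exactly that each $U_i$ is a Gr\"obner--Shirshov basis for $k\langle Y_i\mid U_i\rangle=\mathcal{A}_{i,t}$ with respect to the degree-lexicographic order on $Y_i^*$ induced from the one fixed on $(\cup_{i\in I}Y_i)^*$. Since the sets $Y_i$ are pairwise disjoint, Lemma~\ref{ass-gsb} applies and immediately gives that $\cup_{i\in I}U_i=\cup_{i\in I}(\Psi_t(S_i)\cup R_{i,t})$ is a Gr\"obner--Shirshov basis for $k\langle\cup_{i\in I}Y_i\mid\cup_{i\in I}U_i\rangle=\mathcal{A}_t$, which is the claim.

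There is essentially no obstacle here: the only nontrivial point is the absence of cross-compositions between elements of $U_i$ and $U_j$ for $i\neq j$, but this is automatic because every leading monomial of an element of $U_i$ lies in $Y_i^*$, and $Y_i^*\cap Y_j^*=\{\varepsilon\}$, which rules out both inclusion and intersection compositions between distinct components. This is precisely what Lemma~\ref{ass-gsb} packages, so no further verification is needed and the corollary follows formally.
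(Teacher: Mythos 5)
Your proposal is correct and follows exactly the paper's own route: the paper proves this corollary by citing Lemmas~\ref{lemm-gsb} and~\ref{ass-gsb}, which is precisely the combination you use (your extra care in reconciling the presentation via $\varphi(\Psi_t(S_i))$ with the one via $R_{i,t}$ is a point the paper already handles inside the proof of Lemma~\ref{lemm-gsb}). No issues.
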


\begin{proof}
  By Lemmas~\ref{lemm-gsb} and~\ref{ass-gsb}, the result follows immediately.
\end{proof}

Now we can establish the first main result of the article, in which we construct a linear basis for a free product of arbitrary trialgebras (resp. dialgebras).

\begin{thm}\label{nf}
Let the notations be as in Remark~\ref{nota-relation}.
  Let $$Y^{(t)}=\{y_1...y_n \mid  y_1\wdots y_n\in \cup_{i\in I}(X_i'\cup\dot{X_i}), \mathsf{ind}(y_p)\neq \mathsf{ind}(y_{p+1}), n\in \mathbb{N}^{+}, 1\leq p\leq n-1\}.$$ Then   the set $$\{\Psi_{t}^{-1}(u)+\mathsf{Id}^{(t)}(\cup_{i\in I}S_i) \mid u\in Y^{(t)}\cap \Psi_{t}(\mathsf{V}^{(t)}(\cup_{i\in I}X_i))\}$$ forms a linear basis for the free product~$\mathsf{V}^{(t)}(\cup_{i\in I}X_i|\cup_{i\in I}S_i)$ of $\{\mathsf{V}^{(t)}(X_i| S_i)\mid i\in I\}$, $t\in \{2,3\}$.
\end{thm}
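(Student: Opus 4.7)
The plan is to chain together the three tools already set up in the paper: Lemma \ref{thm-fp} identifies the free product as a single quotient $\mathsf{V}^{(t)}(\cup_{i\in I}X_i\mid \cup_{i\in I}S_i)$; Lemma \ref{emb} then reduces the problem of finding a linear basis to finding one for the associative algebra $\mathcal{A}_t$ intersected with $\Psi_t(\mathsf{V}^{(t)}(\cup_{i\in I}X_i))$; and Corollary \ref{coro-gsb-at} together with Lemma \ref{l1} tells us that a linear basis of $\mathcal{A}_t$ is given by $\mathsf{Irr}(\cup_{i\in I}(\Psi_t(S_i)\cup R_{i,t}))$. So the heart of the proof is to check that
$$\mathsf{Irr}\bigl(\cup_{i\in I}(\Psi_t(S_i)\cup R_{i,t})\bigr) \;=\; Y^{(t)}$$
(modulo the empty word), after which we just quote Lemma \ref{emb} with $X=\cup_{i\in I}X_i$ and $S=\cup_{i\in I}S_i$.

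To establish this set equality I would catalogue the leading monomials of the two kinds of generators. Since $S_i$ is a multiplication table, each element of $\Psi_t(S_i)$ has the form $y_py_q-\sum_m\beta_m z_m$ where $y_p,y_q\in X_i\cup\dot X_i$ with at least one dotted and $z_m\in\dot X_i$; by the deg-lex order the leading monomial is the two-letter word $y_py_q$. For $R_{i,t}\subseteq k\langle X_i\rangle$, leading monomials live in $X_i^+$, and by the very definition of $X_i'$ a word $w\in X_i^*$ avoids all such leading monomials as subwords iff $w\in X_i'$; in particular, because $\varphi(\Psi_t(S_i))\subseteq R_{i,t}$ forces every two-letter word in $X_i^*$ to be reducible, $X_i'$ consists only of single letters. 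Thus a word $u=y_1\cdots y_n$ over $\cup_{i\in I}(X_i\cup\dot X_i)$ is irreducible iff (i) no two consecutive $y_p,y_{p+1}$ of the same index form a leading monomial of $\Psi_t(S_i)$, which forbids the configuration ``same index with at least one dotted'', and (ii) no subword lies in $X_i^*\overline{R_{i,t}}X_i^*$, which forbids any two consecutive letters both lying undotted in the same $X_i$ and also forbids any individual undotted letter outside $X_i'$. These two conditions together say exactly that each $y_p\in X_{\mathsf{ind}(y_p)}'\cup\dot X_{\mathsf{ind}(y_p)}$ and $\mathsf{ind}(y_p)\neq\mathsf{ind}(y_{p+1})$, which is the defining condition of $Y^{(t)}$.

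Once the equality $\mathsf{Irr}(\cup_{i\in I}(\Psi_t(S_i)\cup R_{i,t}))=Y^{(t)}$ is in hand, Lemma \ref{emb} (applied with $S=\cup_{i\in I}S_i$, whose image $\Psi_t(S)\cup\varphi(\Psi_t(S))$ equals $\cup_{i\in I}(\Psi_t(S_i)\cup\varphi(\Psi_t(S_i)))$ since each $\varphi(\Psi_t(S_i))\subseteq R_{i,t}$ generates the same ideal) immediately gives that
$$\{\Psi_t^{-1}(u)+\mathsf{Id}^{(t)}(\cup_{i\in I}S_i)\mid u\in Y^{(t)}\cap\Psi_t(\mathsf{V}^{(t)}(\cup_{i\in I}X_i))\}$$
is a linear basis for $\mathsf{V}^{(t)}(\cup_{i\in I}X_i\mid\cup_{i\in I}S_i)$, and Lemma \ref{thm-fp} identifies the latter with the free product.

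The routine part is bookkeeping (assembling the three cited lemmas and verifying that the generators coming from $\varphi$ can be absorbed into $R_{i,t}$). The one place where care is required is the explicit description of the leading monomials in both $\Psi_t(S_i)$ and $R_{i,t}$, in particular the fact that the prescribed order (dotted letters beat undotted, and within each alphabet deg-lex is used) makes the two-letter word $y_py_q$ always the leading term of the multiplication-table relations regardless of which letter is dotted; that is the only spot where an error would slip the whole characterization of $Y^{(t)}$ out of alignment, so it is the step I would write out most carefully.
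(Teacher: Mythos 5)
Your overall architecture is exactly the paper's: identify the free product via Lemma~\ref{thm-fp}, pass to $\mathcal{A}_t$ via Lemma~\ref{emb}, use Corollary~\ref{coro-gsb-at} and Lemma~\ref{l1} to get $\mathsf{Irr}(\cup_{i\in I}(\Psi_t(S_i)\cup R_{i,t}))$ as a basis of $\mathcal{A}_t$, and then compare irreducible words with $Y^{(t)}$. But the set equality you declare to be the heart of the proof, $\mathsf{Irr}(\cup_{i\in I}(\Psi_t(S_i)\cup R_{i,t}))=Y^{(t)}$, is false for $t=2$. Your leading-monomial catalogue asserts that irreducibility ``forbids the configuration same index with at least one dotted,'' but for a dialgebra the multiplication table $S_i$ only contains relations for $\vdash$ and $\dashv$, so the leading monomials of $\Psi_2(S_i)$ are the two-letter words with \emph{exactly} one dotted letter (this is equation~\eqref{eq-lm} in the paper). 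Consequently a word such as $\dot{x}\dot{y}$ with $x,y\in X_i$ contains no leading monomial of $\Psi_2(S_i)\cup R_{i,2}$ and lies in $\mathsf{Irr}$, yet its two letters have equal index, so it is not in $Y^{(2)}$. Your proposed equality of sets therefore breaks exactly at the step you flagged as the delicate one.

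The repair is the one the paper makes: do not compare $\mathsf{Irr}$ with $Y^{(t)}$ outright, but only after intersecting both with $\Psi_t(\mathsf{V}^{(t)}(\cup_{i\in I}X_i))$. Monomials in $\Psi_2(\mathsf{V}^{(2)}(\cup_{i\in I}X_i))$ carry exactly one dotted letter, so the offending words $\dot{x}\dot{y}$ (and, for that matter, the empty word) are discarded by the intersection; this is the content of~\eqref{eqyy}, and with it both inclusions between $Y^{(t)}\cap\Psi_t(\mathsf{V}^{(t)}(\cup_{i\in I}X_i))$ and $(X^*\setminus X^*\overline{S}X^*)\cap\Psi_t(\mathsf{V}^{(t)}(\cup_{i\in I}X_i))$ go through for $t=2$ as well as $t=3$ (where your unintersected equality does hold modulo the empty word). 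Since the theorem's conclusion only ever refers to the intersected sets, your argument is salvageable, but as written the $t=2$ case does not close.
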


\begin{proof}  Denote~$\cup_{i\in I}(\Psi_t(S_i)\cup R_{i,t})$ by~$S$, and denote~$\cup_{i\in I}(\dot{X_i}\cup X_i)$ by~$X$. Then  by Lemma~\ref{l1} and Corollary~\ref{coro-gsb-at}, the set
$$\mathsf{Irr}(S)=\{u+\mathsf{Id}(S)\mid u\in  X^*\setminus X^*\overline{S}X^*\}$$
   forms a set of  linear basis for the quotient associative algebra $ \mathcal{A}_t$.
   By Lemma~\ref{emb}, $\mathsf{V}^{(t)}( \cup_{i\in I}(\dot{X_i}\cup X_i) |\cup_{i\in I}(\Psi_t(S_i)\cup R_{i,t}))$   has a $k$-basis:
$$ \{\Psi_{t}^{-1}(u)+\mathsf{Id}^{(t)}(S) \mid u\in (X^*\setminus X^*\overline{S}X^*)\cap \Psi_{t}(\mathsf{V}^{(t)}(\cup_{i\in I}X_i))\}.$$
So it suffices to show
   $$Y^{(t)}\cap \Psi_{t}(\mathsf{V}^{(t)}(\cup_{i\in I}X_i)) =(X^*\setminus X^*\overline{S}X^*) \cap \Psi_{t}(\mathsf{V}^{(t)}(\cup_{i\in I}X_i)).$$

For each $i\in I$, since $\{x+\mathsf{Id}^{(t)}(S_i)\mid x\in X_i\}$ is a linear basis for~$\mathsf{V}^{(t)}(X_i|S_i)$ and $S_i$ is the multiplication table for~$\mathsf{V}^{(t)}(X_i|S_i)$,  we have
\begin{equation}\label{eq-lm}
\overline{\Psi_t(S_i)}=
\begin{cases}
\{yz\mid y,z\in X_i\cup\dot{X_i}, \text{ and at least one of } y,z \text{ lies in } \dot{X_i}\} \text{ if }t=3,\\
\{yz\mid y,z\in X_i\cup\dot{X_i}, \text{ and exactly one of } y,z \text{ lies in } \dot{X_i}\} \text{ if }t=2,
\end{cases}
\end{equation}
and
 $$\overline{\varphi(\Psi_t(S_i))}=\{yz\mid y,z\in X_i \}\subseteq \overline{R_{i,t}}.$$
 Moreover, for every~$i\in I$ and for all~$y,y'\in \dot{X_i}$, we have
\begin{equation}\label{eqyy}
  yy'\in \overline{\Psi_3(S_i)} \mbox{ if } t=3, \mbox{ and } yy'\notin \Psi_{2}(\mathsf{V}^{(2)}(\cup_{i\in I}X_i)) \mbox{ if } t=2.
\end{equation}
Therefore, if  $u=y_1...y_n\in (X^*\setminus X^*\overline{S}X^*) \cap \Psi_{t}(\mathsf{V}^{(t)}(\cup_{i\in I}X_i))$, where $y_1\wdots y_n$ lies in~$X$, then we have
$$\mathsf{ind}(y_p)\neq \mathsf{ind}(y_{p+1})$$
for every integer~$p$ such that~$ 1\leq p\leq n-1$.
Since $X_i'=X_i^*\setminus X_i^*\overline{R_{i,t}}X_i^*=X_i\setminus\{\bar{f}\mid f\in R_{i,t}\}$, we deduce that $$y_1\wdots y_n\in \cup_{i\in I}(X_i'\cup\dot{X_i} ).$$
It follows immediately that
$$ (X^*\setminus X^*\overline{S}X^*) \cap \Psi_{t}(\mathsf{V}^{(t)}(\cup_{i\in I}X_i))\subseteq Y^{(t)},$$
and thus we obtain
$$ (X^*\setminus X^*\overline{S}X^*) \cap \Psi_{t}(\mathsf{V}^{(t)}(\cup_{i\in I}X_i))\subseteq Y^{(t)} \cap \Psi_{t}(\mathsf{V}^{(t)}(\cup_{i\in I}X_i)).$$

On the other hand,   since~$\overline{R_{i,t}}$ is a subset of~$X_i^*$,    it is clear that every~$\dot{x}\in X$ is not a leading monomial of any element in~$\cup_{i\in I}(\Psi_t(S_i)\cup R_{i,t})$.
Moreover, since~$X_i'= X_i\setminus\{\bar{f}\mid f\in R_{i,t}\}$ and since the leading monomial of every element in~$\Psi_t(S_i)$ for every~$i\in I$ has length at least two, we deduce that every $x\in X_i'$ is not a leading monomial of any element in~$\cup_{i\in I}(\Psi_t(S_i)\cup R_{i,t})$.  Combining the above reasoning with~\eqref{eq-lm} and~\eqref{eqyy}, we deduce that
$$
Y^{(t)}\cap \Psi_{t}(\mathsf{V}^{(t)}(\cup_{i\in I}X_i))\subseteq (X^*\setminus X^*\overline{S}X^*) \cap \Psi_{t}(\mathsf{V}^{(t)}(\cup_{i\in I}X_i)). $$
 The result follows immediately.
\end{proof}

By Lemma~\ref{emb} and by Theorem~\ref{nf}, we immediately obtain the following alternative description of a free product of arbitrary trialgebras (resp. dialgebras).

\begin{coro}\label{co-nf}
  Let the notations be as in Remark~\ref{nota-relation}.  Then the subalgebra $\mathcal{P}_t$ of
$$\mathcal{A}_t^{(t)}=k\langle\cup_{i\in I}(X_i\cup\dot{X_i})|\cup_{i\in I}(\Psi_t(S_i)\cup \varphi(\Psi_t(S_i))) \rangle^{(t)}$$
 generated by~  $\{\dot{x}+\mathsf{Id}(\cup_{i\in I}(\Psi_t(S_i)\cup R_{i,t}))\mid \dot{x}\in \cup_{i\in I}\dot{X_i}\}$  is the free product of algebras  $\{\mathsf{V}^{(t)}(X_i|S_i)\mid i\in I \}$.  Moreover,  with the notations of Theorem~\ref{nf},
the set
$$\{u+\mathsf{Id}(\cup_{i\in I}(\Psi_t(S_i)\cup R_{i,t})) \mid u\in Y^{(t)}\cap \Psi_{t}(\mathsf{V}^{(t)}(\cup_{i\in I}X_i))\}$$ forms a linear basis for~$\mathcal{P}_t$, $t\in \{2,3\}$.
\end{coro}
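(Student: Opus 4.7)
The plan is to derive the corollary directly by combining Lemma~\ref{thm-fp}, Lemma~\ref{emb}, and Theorem~\ref{nf}, so no fundamentally new work is required. First I would invoke Lemma~\ref{thm-fp} to identify the free product of $\{\mathsf{V}^{(t)}(X_i|S_i)\mid i\in I\}$ with $\mathsf{V}^{(t)}(\cup_{i\in I}X_i\mid\cup_{i\in I}S_i)$. Then, applying Lemma~\ref{emb} with generating set $\cup_{i\in I}X_i$ and relation set $\cup_{i\in I}S_i$, and using the evident identities $\Psi_t(\cup_i S_i)=\cup_i\Psi_t(S_i)$ and $\varphi(\Psi_t(\cup_i S_i))=\cup_i\varphi(\Psi_t(S_i))$, this free product embeds via $\widetilde{\Psi}_t$ into $\mathcal{A}_t^{(t)}$, with image the subalgebra generated by the cosets $\{\dot{x}+\mathsf{Id}(\cup_i(\Psi_t(S_i)\cup\varphi(\Psi_t(S_i))))\mid \dot{x}\in\cup_i\dot{X_i}\}$.

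Next I would identify this defining ideal with $\mathsf{Id}(\cup_i(\Psi_t(S_i)\cup R_{i,t}))$, which is the one appearing in the statement. Since each $R_{i,t}$ is a Gr\"obner--Shirshov basis for $\mathcal{B}_{i,t}=k\langle X_i|\varphi(\Psi_t(S_i))\rangle$, the sets $R_{i,t}$ and $\varphi(\Psi_t(S_i))$ generate the same two-sided ideal of $k\langle X_i\rangle$; this equality persists after enlarging to the ambient free algebra $k\langle\cup_i(X_i\cup\dot{X_i})\rangle$, and taking unions over $i\in I$ then yields the desired identification of ideals (this is essentially the argument used at the start of the proof of Lemma~\ref{lemm-gsb}). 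Consequently $\mathcal{P}_t$ coincides with the image of $\widetilde{\Psi}_t$, which proves the first assertion.

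Finally, for the basis statement I would transport the $k$-basis of $\mathsf{V}^{(t)}(\cup_iX_i\mid\cup_iS_i)$ supplied by Theorem~\ref{nf} across the isomorphism $\widetilde{\Psi}_t$. Because $\widetilde{\Psi}_t$ sends $\Psi_t^{-1}(u)+\mathsf{Id}^{(t)}(\cup_iS_i)$ to $u+\mathsf{Id}(\cup_i(\Psi_t(S_i)\cup R_{i,t}))$ for every $u\in\Psi_t(\mathsf{V}^{(t)}(\cup_iX_i))$, the set displayed in the corollary is precisely the image of the Theorem~\ref{nf} basis under an algebra isomorphism, and is therefore itself a $k$-basis of $\mathcal{P}_t$. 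No real obstacle is expected here: all the substantive Gr\"obner--Shirshov computations and combinatorics were already carried out in Lemma~\ref{lemm-gsb} and Theorem~\ref{nf}; the only point that needs a moment of care is the ideal identification in the second paragraph, and that is routine.
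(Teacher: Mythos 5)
Your proposal is correct and follows essentially the same route as the paper, which derives this corollary in one line from Lemma~\ref{emb} and Theorem~\ref{nf} (with Lemma~\ref{thm-fp} implicit); you merely make explicit the identification $\mathsf{Id}(\cup_{i\in I}(\Psi_t(S_i)\cup \varphi(\Psi_t(S_i))))=\mathsf{Id}(\cup_{i\in I}(\Psi_t(S_i)\cup R_{i,t}))$ and the transport of the basis along $\widetilde{\Psi}_t$, both of which the paper leaves to the reader.
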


 Let $(T, \vdash, \dashv,\perp)$ (resp. $(D, \vdash, \dashv)$) be an arbitrary trialgebra (resp. dialgebra) and let~$U^{(3)}$ (resp. $U^{(2)}$) be the ideal of~$T$ (resp. $D$) generated by~
$$
\{f\vdash g-f\dashv g, f\vdash g-f\perp g\mid f,g \in T\}\ \ \ (resp. \ \{f\vdash g-f\dashv g\mid f,g\in D\}).
$$
Then $T/U^{(3)}$ (resp. $D/U^{(2)}$) is called the \emph{associated associative algebra} of the  trialgebra $T$ (resp. dialgebra $D$).

For an arbitrary associative algebra~$\mathcal{B}$, if we define~$f\vdash g=f\dashv g=f\perp g=fg$ for all~$f,g\in \mathcal{B}$, then~$(\mathcal{B}, \vdash, \dashv,\perp)$ is a trialgebra which is denoted by~$\mathcal{B}^{[3]}$, and $(\mathcal{B}, \vdash, \dashv)$ is a dialgebra which is denoted by~$\mathcal{B}^{[2]}$ (cf. Equations~\eqref{talg} and~\eqref{dalg}).
 Now we show that we can identify~$k\langle X_i |\varphi(\Psi_t(S_i))\rangle$  with the associated associative algebra of the  trialgebra~$\mathsf{V}^{(t)}(X_i|S_i)$ when~$t=3$ (resp. dialgebra when~$t=2$).

\begin{lemm}\label{ass-ass}
  With the notations in Remark~\ref{nota-relation}, let $U^{(3)}$ \emph{(resp. $U^{(2)}$)} be the ideal of~$\mathsf{V}^{(3)}(X_i)$ \emph{(resp. $\mathsf{V}^{(2)}(X_i)$)} generated by~$\{f\vdash g-f\dashv g, f\vdash g-f\perp g\mid f,g \in \mathsf{V}^{(3)}(X_i)\}$
  \emph{(resp. $\{f\vdash g-f\dashv g\mid f,g\in \mathsf{V}^{(2)}(X_i)\}$)}.
 Then~$\mathsf{V}^{(t)}(X_i|U^{(t)}\cup S_i )$ is isomorphic to~$k\langle X_i|\varphi(\Psi_t(S_i))\rangle^{[t]}$,  for~$t\in \{2,3\}$. Moreover, the set~$\{x+\mathsf{Id}^{(t)}(U^{(t)}\cup S_i)\mid x\in X_i'\}$ forms a linear basis for~$\mathsf{V}^{(t)}(X_i|U^{(t)}\cup S_i )$.
\end{lemm}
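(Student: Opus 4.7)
The plan is to identify the quotient $\mathsf{V}^{(t)}(X_i|U^{(t)})$ (i.e., before imposing~$S_i$) with the free associative algebra~$k\langle X_i\rangle$ equipped with its induced $[t]$-algebra structure; after this identification the relations of~$S_i$ translate precisely to~$\varphi(\Psi_t(S_i))$, and the linear basis statement will follow from the Gr\"obner-Shirshov basis~$R_{i,t}$ of~$\mathcal{B}_{i,t}$ together with Lemma~\ref{l1}.

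Concretely, I would first define $\pi_t:=\varphi\circ\Psi_t:\mathsf{V}^{(t)}(X_i)\to k\langle X_i\rangle^{[t]}$ and check (by a short calculation using~\eqref{eq1} and the fact that~$\varphi$ is an associative algebra homomorphism) that~$\pi_t$ is a trialgebra, resp.\ dialgebra, homomorphism; for example
$$
\pi_t(a\vdash b)=\varphi\bigl(\varphi(\Psi_t(a))\Psi_t(b)\bigr)=\pi_t(a)\pi_t(b)=\pi_t(a)\vdash\pi_t(b)
$$
in~$k\langle X_i\rangle^{[t]}$. Since the three operations of~$k\langle X_i\rangle^{[t]}$ all coincide with the associative multiplication, each generator $f\vdash g-f\dashv g$ and $f\vdash g-f\perp g$ of~$U^{(t)}$ lies in~$\ker\pi_t$, so~$\pi_t$ descends to a homomorphism $\overline{\pi_t}:\mathsf{V}^{(t)}(X_i|U^{(t)})\to k\langle X_i\rangle^{[t]}$. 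For the inverse, observe that in~$\mathsf{V}^{(t)}(X_i|U^{(t)})$ the three operations collapse to a single operation (this is the step that needs the most care, see below), and that this common operation is associative, inheriting associativity from any one of the three. Hence~$\mathsf{V}^{(t)}(X_i|U^{(t)})$ is an associative algebra generated by the images of~$X_i$, and the universal property of~$k\langle X_i\rangle$ produces a homomorphism~$\sigma$ that is inverse to~$\overline{\pi_t}$ on generators, hence everywhere.

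Quotienting both sides further by the ideals generated by~$S_i$ and by $\pi_t(S_i)=\varphi(\Psi_t(S_i))$, respectively, yields
$$
\mathsf{V}^{(t)}(X_i|U^{(t)}\cup S_i)\;\cong\;k\langle X_i|\varphi(\Psi_t(S_i))\rangle^{[t]}\;=\;\mathcal{B}_{i,t}^{[t]},
$$
which establishes the first assertion. For the linear basis, since~$R_{i,t}$ is a Gr\"obner-Shirshov basis for~$\mathcal{B}_{i,t}$, Lemma~\ref{l1} gives the linear basis $\mathsf{Irr}(R_{i,t})=\{x+\mathsf{Id}(R_{i,t})\mid x\in X_i'\}$ of~$\mathcal{B}_{i,t}$, which doubles as a basis of~$\mathcal{B}_{i,t}^{[t]}$ because the two share the same underlying vector space. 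Transporting this basis back through the isomorphism of the previous display yields the desired basis $\{x+\mathsf{Id}^{(t)}(U^{(t)}\cup S_i)\mid x\in X_i'\}$.

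The principal point requiring care is the claim that all three operations of $\mathsf{V}^{(t)}(X_i|U^{(t)})$ coincide on every pair of elements, not merely on specified representatives. The verification is brief but essential: because~$U^{(t)}$ is generated as an ideal by \emph{all} expressions $f\vdash g-f\dashv g$ and $f\vdash g-f\perp g$ with $f,g$ ranging over the whole algebra, any two classes in the quotient admit representatives~$h,k$ for which $h\vdash k\equiv h\dashv k\equiv h\perp k\pmod{\mathsf{Id}^{(t)}(U^{(t)})}$. Once this is settled, associativity of the common operation and the identification with~$k\langle X_i\rangle$ via the universal property are formal; the rest of the argument is bookkeeping.
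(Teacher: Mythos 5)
Your proof is correct and follows essentially the same route as the paper's: both directions of the isomorphism are built from $\varphi\circ\Psi_t$ on one side and the universal property of the free associative algebra $k\langle X_i\rangle$ on the other (using that the three operations collapse in the quotient), and the linear basis is obtained by transporting $\mathsf{Irr}(R_{i,t})$ through the isomorphism. The only cosmetic difference is that you factor through the intermediate identification $\mathsf{V}^{(t)}(X_i|U^{(t)})\cong k\langle X_i\rangle^{[t]}$ before imposing $S_i$, whereas the paper constructs the mutually inverse homomorphisms directly between the final quotients.
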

\begin{proof}
First of all, it is easy to see that all the operations in~$\mathsf{V}^{(t)}(X_i|U^{(t)}\cup S_i )$ coincide and then $\mathsf{V}^{(t)}(X_i|U^{(t)}\cup S_i )$ is also an associative algebra.  Since~$k\langle X_i \rangle$ is the free associative algebra generated by~$X_i$, we can define an (associative) algebra homomorphism~$\psi$ from~$k\langle X_i \rangle$ to~$\mathsf{V}^{(t)}(X_i|U^{(t)}\cup S_i )$ by the rule~$\psi(x)=x+\mathsf{Id}^{(t)}(U^{(t)}\cup S_i)$ for every~$x\in X_i$.    Moreover, since the operations~$\vdash$ and~$\dashv$ in~$\mathsf{V}^{(t)}(X_i|U^{(t)}\cup S_i )$ coincide, for every element~$f\in S_i$, we have
$$\psi(\varphi(\Psi_t(f)))+\mathsf{Id}^{(t)}(U^{(t)}\cup S_i)=f+\mathsf{Id}^{(t)}(U^{(t)}\cup S_i)=\mathsf{Id}^{(t)}(U^{(t)}\cup S_i),$$
in other words,  $\psi$ induces an (associative) algebra homomorphism, denoted by~$\widetilde{\psi}$, from the algebra~$k\langle X_i|\varphi(\Psi_t(S_i))\rangle$ to~$\mathsf{V}^{(t)}(X_i|U^{(t)}\cup S_i )$ by the rule:
$$\widetilde{\psi}(x+\mathsf{Id}(\varphi(\Psi_t(S_i))))=
x+\mathsf{Id}^{(t)}(U^{(t)}\cup S_i)$$ for every~$x\in X_i$. Clearly, $\widetilde{\psi}$ is also a dialgebra (resp. trialgebra) homomorphism when~$t=2$ (resp. $t=3$) from~$k\langle X_i|\varphi(\Psi_t(S_i))\rangle^{[t]}$ to~$\mathsf{V}^{(t)}(X_i|U^{(t)}\cup S_i )$.

On the other hand, we have an algebra homomorphism
 $$
 \widetilde{\varphi}:  \ \mathsf{V}^{(t)}(X_i|U^{(t)}\cup S_i ) \longrightarrow k\langle X_i|\varphi(\Psi_t(S_i))\rangle^{[t]},\ x+\mathsf{Id}^{(t)}(U^{(t)}\cup S_i)\mapsto x+\mathsf{Id}(\varphi(\Psi_t(S_i)))
 $$
 for every~$x\in X_i$  (as a dialgebra homomorphism when~$t=2$ and as a trialgebra homomorphism when~$t=3$). Moreover, $\widetilde{\varphi}$ is clearly an associative algebra homomorphism.

Obviously, $\widetilde{\varphi}$ is the inverse of~$\widetilde{\psi}$. The first claim follows immediately.

As for the last assertion, by Remark~\ref{nota-relation}, the set~$ \{x+\mathsf{Id}(\varphi(\Psi_t(S_i)))\mid x\in X_i'\}$ forms a linear basis for~$k\langle X_i|\varphi(\Psi_t(S_i))\rangle$, so the set~$\{x+\mathsf{Id}^{(t)}(U^{(t)}\cup S_i)\mid x\in X_i'\}$ forms a linear basis for~$\mathsf{V}^{(t)}(X_i|U^{(t)}\cup S_i )$.
\end{proof}

 We conclude the article with an observation on a free product of arbitrary trioids. Assume that~$\{(X_i,\vdash, \dashv, \perp)\mid i\in I\}$ is a family of disjoint trioids. (Sometimes  we use the notation~$X_i$ both for the trioid~$(X_i,\vdash, \dashv, \perp)$ and for its underlying set to simplify the notations if no confusions arise.)   Let~$\mathsf{Vid}^{(3)}(X_i)$ be the free trioid generated by the set~$X_i$  and let~$\rho_i$ be the congruence of~$\mathsf{Vid}^{(3)}(X_i)$ generated by
\begin{equation}\label{trioid-si}
 S_i=\{(x\delta y,\{x\delta y\})\mid x,y\in X_i, \delta\in\{\vdash, \dashv, \perp\}\},
\end{equation}
where~$\{x\delta y\}$ is the letter in the set~$X_i$ such that we have~$x\delta y=\{x\delta y\}$ in~$(X_i,\vdash, \dashv, \perp)$.
 It is clear that we have
$$
(X_i,\vdash, \dashv, \perp) \cong \mathsf{Vid}^{(3)}(X_i)/\rho_i.
$$  Denote by~$kX_i$ the linear space with a linear basis~$X_i$, and extend the operations~$\vdash,\dashv,\perp$ on~$kX_i$ in a natural way.  Then we have the trialgebra $(kX_i,\vdash,\dashv,\perp)$ and an isomorphism of trialgebras
 $$(kX_i,\vdash,\dashv,\perp)\cong \mathsf{V}^{(3)}(X_i|S_i), $$
where we identify the set $S_i$ with the set
$\{x\delta y-\{x\delta y\}\mid x,y\in X_i, \delta\in\{\vdash, \dashv, \perp\}\}.$

Clearly, $(X_i,\vdash,\dashv,\perp)$ is isomorphic to the subtrioid of~$\mathsf{V}^{(3)}(X_i|S_i)$ generated by the set~$\{x+\mathsf{Id}^{(3)}(S_i)\mid x\in X_i\}$.
Therefore, by Lemma~\ref{emb}, $(X_i,\vdash,\dashv,\perp)$ is isomorphic to the subtrioid of $k\langle X_i\cup\dot{X_i}|\Psi_{3}(S_i)\cup \varphi(\Psi_{3}(S_i)) \rangle^{(3)}$ generated by the set
$$\dot{X_i}+\mathsf{Id}(\Psi_{3}(S_i)\cup \varphi(\Psi_{3}(S_i))):= \{\dot{x}+\mathsf{Id}(\Psi_{3}(S_i)\cup \varphi(\Psi_{3}(S_i)))\mid \dot{x}\in \dot{X}_i\}. $$
Finally,  by Lemmas~\ref{thm-fpt} and~\ref{emb},  the subtrioid of~$k\langle\cup_{i\in I}(X_i\cup\dot{X_i})|\cup_{i\in I}(\Psi_3(S_i)\cup \varphi(\Psi_3(S_i))) \rangle^{(3)}$ generated~$\cup_{i\in I}(\dot{X_i}+\mathsf{Id}(\Psi_{3}(S_i)\cup \varphi(\Psi_{3}(S_i)))$ is the free product~$\mathsf{Vid}^{(3)}(\cup_{i\in I}X_i|\cup_{i\in I}S_i)$ of the family of trioids~$\{(X_i,\vdash,\dashv,\perp)\mid i\in I\}=\{\mathsf{Vid}^{(3)}(X_i|S_i)\mid i\in I\}$.

Denote by~$\rho_i'$ the congruence of~$(X_i,\vdash,\dashv,\perp)$ generated by
$$\{(x\delta y,x\delta' y) \mid x,y\in X_i, \delta,\delta'\in\{\vdash, \dashv, \perp\}\}$$
and define
$$(B_i,\vdash,\dashv,\perp):=(X_i,\vdash,\dashv,\perp)/\rho_i'.$$
 Suppose that~$R_i$  is  a Gr\"obner-Shirshov basis of~$k\langle X_i |  \varphi(\Psi_3(S_i))\rangle$ containing~$\varphi(\Psi_3(S_i))$,  and define
$$X_i'=X_i\setminus \{\bar{f} \mid f\in R_i \}. $$
Then  by Lemma \ref{ass-ass},  we deduce that
$$X_i'\rho_i':=\{x\rho_i'\mid x\in X_i'\}$$ forms  a set of normal forms of elements in~$(B_i,\vdash,\dashv,\perp)$.
Therefore, we have
$$(B_i,\vdash,\dashv,\perp)=(X_i,\vdash,\dashv,\perp)/\rho_i'=(\{x\rho_i' \mid x\in X_i\},\vdash,\dashv,\perp)=(\{x\rho_i' \mid x\in X_i'\},\vdash,\dashv,\perp).$$
 Define
$$V_3=\mathsf{V}^{(3)}(\cup_{i\in I} X_i), \ Y_3=\cup_{i\in I}(\dot{X_i}\cup X_i') ,$$ and define
\begin{equation*}
\mathsf{FP}(X_i)_{i\in I}=\{y_1...y_n\in \Psi_3(V_3) \mid
y_1\wdots y_n\in Y_3,n\in\mathbb{N}^{+},
\mathsf{ind}(y_p)\neq \mathsf{ind}(y_{p+1}), p\leq n-1\}.
\end{equation*}
Now we define operations~$\vdash', \dashv', \perp'$ on~$\mathsf{FP}(X_i)_{i\in I}$
  such that~$(\mathsf{FP}(X_i)_{i\in I}, \vdash', \dashv', \perp')$ becomes  the free product of~$\{(X_i, ,\vdash,\dashv,\perp)\mid i\in I\}$.

For all letters~$y, z\in Y_3$, define~$\mathsf{Red}(yz)$ to be the word~$u$ in the free semigroup~$Y_3^+$ such that~$u+\mathsf{Id}(\cup_{i\in I}(\Psi_3(S_i)\cup \varphi(\Psi_3(S_i)))$ is the   normal form   of~$yz+\mathsf{Id}(\cup_{i\in I}(\Psi_3(S_i)\cup \varphi(\Psi_3(S_i)))$  obtained in the associative algebra~$k\langle \cup_{i\in I}(X_i\cup \dot{X_i})|\cup_{i\in I}(\Psi_3(S_i)\cup \varphi(\Psi_3(S_i))\rangle $  by applying the Gr\"onber-Shirshov basis~$\cup_{i\in I}(\Psi_3(S_i)\cup R_i)$.
There exist several cases:

\ITEM1 If~$\mathsf{ind}(y)\neq \mathsf{ind}(z)$, then we have~$\mathsf{Red}(yz)=yz$;

\ITEM2 If $y,z\in X_i'$  for some index $i\in I$, then we can rewrite $yz$ into a letter in~$X_i'$ by relations in $R_i$, namely, we have~$(y\vdash z)\rho_i'=\mathsf{Red}(yz)\rho_i'$ in~$(B_i,\vdash,\dashv, \perp)$. (Note that in the trioid~$(B_i,\vdash,\dashv, \perp)$, the operations~$\vdash,\dashv, \perp$ coincide.)

\ITEM3 If $y,z\in \dot{X_i}\cup X_i'$ for some index $i\in I$ and~$\{y,z\}\cap \dot{X_i}\neq \emptyset$, then we can rewrite~$yz$ into a letter in~$\dot{X_i}$  by relations in~$\Psi_3(S_i)$.  More precisely, if~$y=\dot{a}\in \dot{X_i}$ and~$z\in X_i'$, then $\mathsf{Red}(yz)=\dot{b}$, where~$b$ is the letter in~$X_i$ such that we have~$b=a\dashv z$ in~$(X_i,\vdash, \dashv, \perp)$; if~$y\in X_i'$ and~$z=\dot{a}\in \dot{X_i}$, then $\mathsf{Red}(yz)=\dot{c}$, where~$c$ is the letter in~$X_i$ such that  we have~$c=y\vdash a$ in~$(X_i,\vdash, \dashv, \perp)$; if~$y=\dot{a}$ and~$z=\dot{b}$, then  $\mathsf{Red}(yz)=\dot{d}$, where~$d$ is the letter in~$X_i$ such that  we have~$d=a\perp b$ in~$(X_i,\vdash, \dashv, \perp)$.

By Corollary~\ref{co-nf},  we  obtain the main result of this article, in which we construct free products of arbitrary (disjoint) trioids.

\begin{thm}\label{nffreeproduct}
Let~$\{(X_i,\vdash,\dashv, \perp)\mid i\in I\}$ be a family of disjoint trioids.  Then~$(\mathsf{FP}(X_i)_{i\in I},\vdash', \dashv',\perp')$ is the free product of~$\{X_i\mid i\in I\}$ with the following products:
\begin{align*}
&y_1...y_n\vdash' z_1...z_m= \varphi(y_1)...\varphi(y_{n-1}) \mathsf{Red}(\varphi(y_n)z_1)z_2...z_m,\\
&y_1...y_n\dashv' z_1...z_m=y_1...y_{n-1}\mathsf{Red}(y_n\varphi(z_1))\varphi(z_2)...\varphi(z_m),\\
&y_1...y_n\perp' z_1...z_m=y_1...y_{n-1}\mathsf{Red}(y_nz_1)z_2...z_m,
\end{align*}
for all $y_1...y_n,z_1...z_m\in \mathsf{FP}(X_i)_{i\in I}$.
\end{thm}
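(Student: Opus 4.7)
The plan is to apply Corollary~\ref{co-nf} in combination with the discussion immediately preceding the statement: that discussion already identifies the free product of the trioid family $\{(X_i,\vdash,\dashv,\perp)\mid i\in I\}$ as the subtrioid $\mathcal{T}$ of $\mathcal{A}_3^{(3)}$ generated by the cosets of $\cup_{i\in I}\dot{X_i}$. The remaining task is therefore to exhibit a bijection between $\mathsf{FP}(X_i)_{i\in I}$ and $\mathcal{T}$ that transforms the prescribed operations $\vdash',\dashv',\perp'$ into the operations inherited from $\mathcal{A}_3^{(3)}$.

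First I would observe that $\mathsf{FP}(X_i)_{i\in I}$ is by definition precisely the set $Y^{(3)}\cap \Psi_3(V_3)$ appearing in Theorem~\ref{nf}. Hence Corollary~\ref{co-nf} guarantees that the cosets $u+\mathsf{Id}(\cup_{i\in I}(\Psi_3(S_i)\cup R_i))$, as $u$ ranges over $\mathsf{FP}(X_i)_{i\in I}$, form a $k$-basis of the trialgebra $\mathcal{P}_3$ that contains $\mathcal{T}$. Since every word $u=y_1\cdots y_n\in \mathsf{FP}(X_i)_{i\in I}$ has at least one dot, a direct induction on $n$ shows that $u$ is attainable as a trioid expression in the dotted generators: splitting $u$ at its dotted positions, each maximal block is produced by iterated $\vdash$ and $\dashv$ (using $a\vdash b=\varphi(a)b$ and $a\dashv b=a\varphi(b)$ from~\eqref{eq1} to strip dots on one side), and successive blocks are joined by $\perp$. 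Thus $\mathcal{T}$ contains every such coset; combined with linear independence in $\mathcal{P}_3$, this yields a set-theoretic bijection of $\mathcal{T}$ with $\mathsf{FP}(X_i)_{i\in I}$.

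Next I would transport the trioid structure along this bijection. For $u=y_1\cdots y_n$ and $v=z_1\cdots z_m$ in $\mathsf{FP}(X_i)_{i\in I}$, the three products in $\mathcal{A}_3^{(3)}$ are $uv$, $\varphi(u)v$, and $u\varphi(v)$ for $\perp$, $\vdash$, and $\dashv$ respectively by~\eqref{eq1} and~\eqref{otalg}. The prefix $y_1\cdots y_{n-1}$ (or its $\varphi$-image) and the suffix $z_2\cdots z_m$ (or its $\varphi$-image) already satisfy the alternating-index and irreducibility conditions characterizing the normal forms, so only the newly created junction pair $y_nz_1$, $\varphi(y_n)z_1$, or $y_n\varphi(z_1)$ can fail to be irreducible. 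When the junction indices differ, no reduction is needed; when they agree at some $i\in I$, the junction is precisely the leading monomial of one element of $\Psi_3(S_i)\cup R_i$ and reduces to a single letter of $X_i'\cup\dot{X_i}$ according to cases (i)--(iii) listed just before the theorem. This reproduces the displayed formulas for $\vdash',\dashv',\perp'$.

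The main technical obstacle will be the no-cascade claim: after the junction reduces to a single letter $w$ of index $i$, one must verify that neither new pair involving $w$ and its neighbour is again a leading monomial of some element in $\cup_{j\in I}(\Psi_3(S_j)\cup R_j)$. This follows from the structural fact (exploited already in the proof of Theorem~\ref{nf}) that every leading monomial in this Gr\"obner--Shirshov basis has both of its letters lying in a single $X_j\cup\dot{X_j}$, together with the alternating-index property of the normal forms in $\mathsf{FP}(X_i)_{i\in I}$: the letters $y_{n-1}$ and $z_2$ (or their $\varphi$-images) have index different from $i$, so the new adjacencies straddle different trioids and no further reduction is triggered. Together with Lemma~\ref{thm-fpt}, this completes the identification of $(\mathsf{FP}(X_i)_{i\in I},\vdash',\dashv',\perp')$ with the free product of $\{(X_i,\vdash,\dashv,\perp)\mid i\in I\}$.
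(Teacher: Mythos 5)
Your proposal follows the same route as the paper's own proof: identify $(\mathsf{FP}(X_i)_{i\in I},\vdash',\dashv',\perp')$ with the subtrioid of $\mathcal{A}_3^{(3)}$ generated by the cosets of the dotted letters, which the discussion preceding the theorem (via Lemmas~\ref{thm-fpt} and~\ref{emb} and Corollary~\ref{co-nf}) already recognizes as the free product, and then read the three operations off the normal forms by reducing the junction pair. The paper compresses all of this into a two-line ``obviously''; your normal-form bijection, junction-reduction cases, and no-cascade argument are precisely the suppressed details, so the proposal is correct and essentially identical in approach.
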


\begin{proof}
  Denote~$\cup_{i\in I}(\Psi_t(S_i)\cup \varphi(\Psi_t(S_i)))$ by~$S$, and denote~$\cup_{i\in I}\dot{X_i}$ by~$\dot{X}$. Obviously, $(\mathsf{FP}(X_i)_{i\in I},\vdash', \dashv',\perp')$ is isomorphic to the sub-trioid of~$\mathcal{A}_3:=k\langle\cup_{i\in I}(X_i\cup\dot{X_i})|S \rangle $ generated by $\dot{X}+\mathsf{Id}(S):=\{\dot{x}+\mathsf{Id}(S)\mid \dot{x}\in \dot{X}\}$, which is isomorphic to~$\mathsf{Vid}^{(3)}(\cup_{i\in I}X_i|\cup_{i\in I}S_i)$. The result follows immediately.
\end{proof}

\begin{rema}
 Similar to Theorem \ref{nffreeproduct}, one can obtain a set of normal forms for free product of arbitrary dimonoids and one can deduce the multiplication table for the free product, which turns out to be essentially the same as that constructed in~\cite{Zhuchok13}.
\end{rema}


\begin{thebibliography}{10}



\bibitem{Ber}  G.M. Bergman, The diamond lemma for ring theory, \textit{Adv. Math.} \textbf{19 } (1978) 178--218.

\bibitem{bokut-chen-survey}
L.A. Bokut and Y.Q. Chen, Gr{\"o}bner--Shirshov bases and their calculation, \textit{Bull. Math. Sci.} \textbf{4}(3) (2014) 325--395.

\bibitem{Di1}
L.A. Bokut, Y.Q. Chen and C.H. Liu, Gr\"obner-{S}hirshov bases for dialgebras,
 \textit{ Internat. J. Algebra Comput.} \textbf{20}(3) (2010) 391--415.


\bibitem{Bo1}  L.A. Bokut, Y. Fong and W.F. Ke, Gr\"{o}bner-Shirshov bases
and composition lemma for associative conformal algebras: an example, \textit{Fundanmental
and applied mathematics}  \textbf{6} (2006)  669--706.

\bibitem{Bo}  L.A. Bokut and K.P. Shum,   Gr\"{o}bner-Shirshov
bases in algebra: an elmentary approach, \textit{Southeast Asian Bulletin of
Mathematics} \textbf{29}(6) (2005)  20--25.

\bibitem{Buchberger65} B. Buchberger, An algorithm for finding the basis elements of the residue class ring of a zero dimensional polynomial ideal, Translated from the 1965 German original by Michael P. Abramson, \textit{J. Symbolic Comput.}\textbf{ 41} (2006) 475--511. 

\bibitem{JM}
J.M. Casas, Trialgebras and Leibniz 3-algebras, \textit{Bol. Soc. Mat. Mex.} \textbf{12}(2) (2006) 165--178.

\bibitem{Di2}
Y.Q.  Chen and G.L. Zhang, A new Composition-Diamond lemma for dialgebras, \textit{Algebra Colloquium} \textbf{24}(1) (2017)  323--350.

\bibitem{KJ}
K.J. Ebrahimi-Fard, Loday-type algebras and the Rota-Baxter relation. \textit{Lett. Math. Phys.} \textbf{61}(2) (2002) 139--147.

\bibitem{Ko17}
 P.S. Kolesnikov, Gr\"obner-{S}hirshov bases for  replicated algebras,  \textit{Algebra Colloquium} \textbf{24}(4) (2017)  563--576.

\bibitem{Ko}
P.S. Kolesnikov, Varieties of dialgebras and conformal algebras, \textit{Siberian Mathematical Journal}  \textbf{49}(2) (2008) 257--272.


\bibitem{Lo99}
J.-L. Loday, A.Frabetti, F.~Chapoton and F.~Goichot, Dialgebras and related
  operads, \textit{Lecture Notes in Mathematics}, vol. \textbf{1763}, Springer-Verlag,
  Berlin, 2001.

\bibitem{Lo04}
J.-L. Loday and M.O. Ronco, Trialgebras and families of polytopes, \textit{Comtemp. Math.} \textbf{346} (2004) 369--398.

\bibitem{NJC}
J.C. Novelli and J.Y. Thibon, Polynomial realizations of some trialgebras. In: \textit{18th Conference on Formal Power Series
and Algebraic Combinatoricas}. San Diego, USA, (2006) 243--254.

\bibitem{Sh}  A.I. Shirshov, Some algorithmic problems for Lie
algebras, \textit{Siberian Math.} \textbf{3} (1962) 292--296.

\bibitem{Zhuchok19}
A.V. Zhuchok, Free commutative trioids, \textit{Semigroup Forum} \textbf{98} (2019)
  355--368.

\bibitem{Zhuchok13}
A.V. Zhuchok, Free products of dimonoids, \textit{Quasigroups Relat. Syst.} \textbf{21}(2) (2013) 273--278

\bibitem{Zhuchok17-2} A.~V. Zhuchok,  Free products of doppelsemigroups, \textit{Algebra Universalis} \textbf{77} (2017)  361--374.


\bibitem{Zhuchok17}
A.V. Zhuchok, Structure of relatively free dimonoids, \textit{Comm. Algebra.} \textbf{45}(4) (2017)  1639--1656.

\bibitem{Zhuchok15}
A.V. Zhuchok, Trioids, \textit{Asian Eur. J. Math.} \textbf{8}(4) (2015) 1550089 (23 pages).

\bibitem{Zhu150}
Y.V. Zhuchok, Free abelian dimonoids, \textit{Algebra and Discrete Mathematics} \textbf{20}(2) (2015)
  330--342.
\end{thebibliography}
\end{document}